\definecolor {processblue}{cmyk}{0.96,0,0,0}
\DeclarePairedDelimiterX{\inp}[2]{\langle}{\rangle}{#1, #2}
\newtheorem{theorem}{Theorem}[section]  
\newtheorem{lemma}[theorem]{Lemma}  
\newtheorem{proposition}[theorem]{Proposition}
\newtheorem{corollary}[theorem]{Corollary}
\theoremstyle{definition}
  \newtheorem{example}[theorem]{Example}
  \newtheorem{remark}[theorem]{Remark}
\newtheorem{notation}[theorem]{Notation}
\newcommand{\sumdd}[2]{\displaystyle \sum_{#1}^{#2}}
\newcommand{\sumd}[1]{\displaystyle \sum_{#1}}
\newcommand{\fracd}[2]{\displaystyle \frac{#1}{#2}}
\newcommand{\bb}[1]{\mathbb{#1}}
\newcommand{\defeq}{\mathrel{\mathpalette{\vcenter{\hbox{$:$}}}=}}
\newcommand{\F}{{\mathcal{F}}}
\newcommand{\T}{{\mathcal{T}}}
\newcommand{\V}{{\mathcal{V}}}
\newcommand{\E}{{\mathcal{E}}}
\newcommand{\Ss}{{\mathcal{S}}}
\title{Characterizing immutable sandpiles: \\ A first look}
\author{David L. Duncan and Wesley J. Engelbrecht}
\date{}
\begin{document}

\maketitle

\begin{abstract}
By working with coefficients in $\bb{Z}$ or $\bb{R}$, one can define two different notions of stability for a sandpile on a graph. We call a sandpile \emph{immutable} when these notions agree. Our main results give linear-algebraic characterizations for large classes of immutable sandpiles.
\end{abstract}

\tableofcontents

\section{Introduction}

Let $\Gamma$ be a connected, finite multigraph without self-loops. Fix a vertex $v_*$, called the \emph{sink}, and denote by $\V'$ the set of non-sink vertices of $\Gamma$. An \emph{(abelian) sandpile} is a non-negative element of the group $\bb{Z}^{\V'}$ of integer-labelings of $\V'$. Intuitively, one can view a sandpile as specifying a configuration of sand particles placed on the non-sink vertices. Sandpiles were originally introduced by Bak--Tang--Wiesenfeld \cite{Bak1,Bak2}, who showed that sandpiles exhibit a structure rich enough to model self-organizing criticality on lattices. This was later generalized to arbitrary graphs by Dhar \cite{Dhar}, with further combinatorial ties elucidated by numerous researchers, including Bj\"{o}rner--Lov\'{a}sz--Shor \cite{BLS} and Biggs \cite{NLB}, who introduced the sink and much of the perspective we take here.

The aforementioned rich structure of sandpiles arises through an equivalence relation called ``legal toppling''. To describe this, let $d: \V' \rightarrow \bb{Z}$ be the degree (valance) map. Given $\sigma \in \bb{Z}^{\V'}$, a vertex $v \in \V'$ is \emph{unstable (for $\sigma$)} if $\sigma(v) \geq d(v)$. Then $\sigma$ is \emph{unstable} if it has an unstable vertex; otherwise $\sigma$ is called \emph{stable}. A \emph{toppling} of a vertex $v$ is the new labeling $\sigma' \in \bb{Z}^{\V'}$ obtained from $\sigma$ by reducing by $d(v)$ the value of $\sigma(v)$, and by increasing by 1 the value of $\sigma$ at each vertex adjacent to $v$. In the sand analogy, this is as if one vertex spills a grain of sand to each of its neighbors. If $v$ is unstable for $\sigma$, then the toppling $\sigma'$ is called \emph{legal}. There is a unique stable sandpile $\tau_\sigma$ that is obtained from $\sigma$ by a sequence of legal topplings \cite{BLS}. We call $\tau_\sigma$ the \emph{stabilization} of $\sigma$. The \emph{$\bb{Z}$-odometer of $\sigma$} is the sandpile $u_{\sigma}^{\bb{Z}}$ defined so that $u_{\sigma}^{\bb{Z}}(v)$ is the number of times the vertex $v \in \V'$ is toppled during the stabilization of $\sigma$; i.e., it measures the amount of sand released by each vertex in this stabilization process.

It was observed by Fey--Levine--Peres \cite{FLP} that this has an equivalent least action-type characterization in which the $\bb{Z}$-odometer is realized as the unique minimizer of a certain system of matrix inequalities; we review this in Section \ref{sec:TheOdometer}. In this characterization, the minimization is taking place over the space $\bb{Z}^{\V'}$ of $\bb{Z}$-labelings. In an effort to better understand scaling limits of sandpiles on integer lattices, Levine and Peres \cite{LP2} considered the same system of matrix inequalities, but minimized over the larger space $\bb{R}^{\V'}$ of $\bb{R}$-labelings. In this larger system, there too is a unique minimizer $u_{\sigma}^{\bb{R}}: \V' \rightarrow \bb{R}$, which we call the \emph{$\bb{R}$-odometer of $\sigma$}. The idea here is that if $\sigma$ were allowed to topple through non-integral quantities, then it may stabilize to a configuration different than $\tau_\sigma$. This can happen, and it turns out that the two stabilizations (integral and continuous) often exhibit very different qualitative features \cite{LP2,LP}. 

There is a physical analogy for the continuous setting too: In place of the grains of sand, imagine a tower of a viscous liquid on each vertex, with mass-distribution described by $\sigma$. Suppose there were a vertex with too much fluid (more than the degree minus one). Then, over time, the fluid at this vertex would flow uniformly to its neighbors, until the vertex becomes stable in this new continuous sense. The $\bb{R}$-odometer measures the amount of fluid released by each vertex in this ``$\bb{R}$-stabilization'' process. 

We will say a sandpile $\sigma$ is \emph{immutable} if $u_{\sigma}^{\bb{Z}} = u_{\sigma}^{\bb{R}}$; that is, if the two notions of stabilization for $\sigma$ agree. Otherwise, we will call the sandpile \emph{mutable}. The idea is that a mutable configuration is one for which a change of state (e.g., from solid to liquid) can change its stabilization. To visualize this, the above analogies can be refined as follows: In place of sand particles, suppose now one has identical, homogenous metal spheres. Start with a sandpile consisting of such metal spheres, as well as one duplicate copy of that sandpile. Allow the first sandpile to stabilize in the usual sandpile sense. For the other sandpile, first increase the temperature to slightly beyond the melting point of the metal, and then allow it to stabilize. The initial sandpile is immutable if and only if the resulting stable configurations are identical.

The aim of the present paper is to better understand which sandpiles are immutable, and which are not. Our efforts were motivated by the following questions:

\begin{itemize}
\item[Q1.] Do immutable sandpiles always exist? How about mutable sandpiles?
\item[Q2.] Is it more common for a sandpile to be mutable or immutable?
\item[Q3.] Is there a simple criterion for determining when a sandpile is immutable?
\item[Q4.] What does the set of all immutable sandpiles look like? Does it have any interesting structure?
\end{itemize}

We will see in Example \ref{ex:stable} that every stable sandpile is immutable, so this provides an affirmative answer to the first question in Q1 (e.g., $\sigma = 0$ and $\sigma = d-1$ are both immutable). Our starting point for the remaining questions in Q1--Q4 is the following theorem, which provides sufficient conditions relative to which immutability is equivalent to the integrality of a certain vector quantity. To state it, let $L'$ be the reduced Laplacian (see Section \ref{sec:Laplacian}) and define a sandpile $\sigma $ to be \emph{uniformly large} if $\sigma \geq d - 1$. 

\begin{theorem}\label{thm:1}
Suppose $\sigma$ is uniformly large. Then the $\bb{R}$-odometer is $u_\sigma^{\bb{R}} = (L')^{-1}(\sigma - d + 1)$. Moreover, $\sigma$ is immutable if and only if 
$$(L')^{-1}(\sigma - d + 1) \in \bb{Z}^{\V'}$$ 
is integral. 
\end{theorem}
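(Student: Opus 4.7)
The plan is to produce an explicit candidate for the $\bb{R}$-odometer, verify it satisfies the least-action characterization reviewed in Section \ref{sec:TheOdometer}, and then use the explicit formula to decide immutability via a direct comparison of the integer- and real-valued feasible sets.

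For Part 1, set $w \defeq (L')^{-1}(\sigma - d + 1)$. I would first invoke the standard fact (recorded in Section \ref{sec:Laplacian}) that the reduced Laplacian $L'$ is invertible with entrywise non-negative inverse. Since $\sigma$ is uniformly large, the vector $\sigma - d + 1$ is non-negative, hence $w \geq 0$. By construction, $\sigma - L' w = d - 1$, so the resulting configuration is (critically) stable, making $w$ a feasible candidate in the minimization defining $u_\sigma^{\bb{R}}$. For minimality, let $u \in \bb{R}^{\V'}$ be any non-negative feasible vector, so that $\sigma - L' u \leq d - 1$. Then
\[
L'(u - w) \;=\; (\sigma - L' w) - (\sigma - L' u) \;=\; (d-1) - (\sigma - L' u) \;\geq\; 0,
\]
and applying the order-preserving operator $(L')^{-1}$ gives $u - w \geq 0$. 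Thus $w$ is the pointwise minimum of the feasible set, so $u_\sigma^{\bb{R}} = w$.

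For Part 2, observe that the feasible set defining $u_\sigma^{\bb{Z}}$ is precisely the integer vectors in the feasible set defining $u_\sigma^{\bb{R}}$. Hence $u_\sigma^{\bb{R}} \leq u_\sigma^{\bb{Z}}$ pointwise, always. If $u_\sigma^{\bb{R}} = w$ is integral, then $w$ itself is a $\bb{Z}$-feasible vector and minimality of $u_\sigma^{\bb{Z}}$ gives $u_\sigma^{\bb{Z}} \leq w$, forcing equality. Conversely, if $u_\sigma^{\bb{Z}} = u_\sigma^{\bb{R}}$, then $w = u_\sigma^{\bb{R}} = u_\sigma^{\bb{Z}} \in \bb{Z}^{\V'}$ is integral.

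The only non-trivial ingredients are the least-action characterizations of the two odometers and the non-negativity of $(L')^{-1}$; once these are in hand, the argument is a short linear-algebraic manipulation. The main point to be careful about is making sure the characterization of $u_\sigma^{\bb{R}}$ from Levine--Peres is indeed the pointwise minimum over the non-negative feasible set (and not, say, a minimizer of an energy functional with the same optimum), since the pointwise interpretation is what makes the comparison $u_\sigma^{\bb{R}} \leq u_\sigma^{\bb{Z}}$ automatic; I expect this to be stated explicitly in the preliminary section on odometers.
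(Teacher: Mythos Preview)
Your proof is correct and follows the same overall shape as the paper's: define $w=(L')^{-1}(\sigma-d+1)$, use non-negativity of $(L')^{-1}$ together with $\sigma\geq d-1$ to verify feasibility, then prove minimality. The immutability clause is handled identically (the paper packages your argument as Lemma \ref{lem:1}).

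There is one genuine difference in the minimality step. You argue that for any feasible $u$ one has $L'(u-w)\geq 0$, and then apply the entrywise non-negative operator $(L')^{-1}$ to conclude $u\geq w$. The paper instead sets $q=w-u_\sigma^{\bb{R}}\geq 0$, observes $L'q\leq 0$, and then uses \emph{positive definiteness} of $L'$: since $q\geq 0$ one gets $\langle L'q,q\rangle\leq 0$, forcing $q=0$. Your route is arguably tidier, since it reuses the same fact (non-negativity of $(L')^{-1}$) already invoked for feasibility rather than bringing in a second property of $L'$; the paper's route, on the other hand, avoids applying $(L')^{-1}$ to an inequality and instead stays on the $L'$ side. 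Both are one-line linear-algebra facts, so the difference is stylistic rather than substantive.
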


We prove this theorem in Section \ref{sec:ProofOfThm1.1}. There, we also give examples in which the conclusions of the theorem do not hold in the absence of the hypothesis that $\sigma$ is uniformly large. 

With the characterization given in Theorem \ref{thm:1}, we now begin to address Q1--Q4 above. For example, the existence of mutable sandpiles in Q1 is deduced for a very large collection of multigraphs by the following corollary, whose proof is given in Section \ref{sec:ProofOfCor1.1}.

\begin{corollary}\label{cor:1}
Assume there is a vertex $v$ in $\Gamma$ that is adjacent to $v_*$ and satisfies the following:
    \begin{enumerate}
      \item[(a)] if $w \neq v$, then $w$ can be connected to $v_*$ by an edge-path that does not contain $v$;
    \item[(b)] $v$ has degree $d(v) \geq 2$.
    \end{enumerate}
 Then there is a sandpile on $\Gamma$ that is mutable. 
\end{corollary}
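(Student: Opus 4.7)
The plan is to apply Theorem~\ref{thm:1} to the specific sandpile $\sigma = d - 1 + e_v$, where $e_v \in \bb{Z}^{\V'}$ is the indicator of the vertex $v$. Since $\Gamma$ is connected we have $d \geq 1$, so $\sigma$ is a genuine non-negative integer sandpile, and it is uniformly large by construction. Because $\sigma - d + 1 = e_v$, Theorem~\ref{thm:1} reduces the entire claim to producing at least one non-integer entry in $u := (L')^{-1} e_v$.

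The starting point is the interpretation of $L' u = e_v$ as saying $u$ is discrete harmonic on $\V' \setminus \{v\}$, with boundary value $u(v_*) = 0$ and a unit inhomogeneity at $v$. A maximum-principle argument---or, equivalently, the Green's-function interpretation of $(L')^{-1}$---yields $0 \leq u(w) \leq u(v)$ for all $w \in \V'$, together with $u(v) > 0$ (otherwise $u \equiv 0$ would contradict $L' u = e_v$). Suppose for contradiction that $u \in \bb{Z}^{\V'}$, so that $u(v) \geq 1$. Rewriting $(L'u)(v) = 1$ in the non-negative-summand form
\[
e(v,v_*)\, u(v) \;+\; \sum_{w \in \V',\, w \sim v} e(v,w)\,\bigl(u(v) - u(w)\bigr) \;=\; 1,
\]
where $e(\cdot,\cdot)$ counts multi-edges between two vertices, and using the hypothesis $v \sim v_*$, the first summand is already at least $u(v) \geq 1$. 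Consequently $e(v, v_*) = 1$, $u(v) = 1$, and $u(w) = 1$ for every $w \in \V'$ adjacent to $v$; hypothesis~(b) ensures at least one such neighbor $w_0 \in \V'$ exists.

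To derive the contradiction, I would use hypothesis~(a) to pick a simple edge-path $x_0 = w_0, x_1, \ldots, x_k = v_*$ avoiding $v$, and propagate the value $1$ along it: at each $x_j$ with $j < k$, the harmonic equation $(L'u)(x_j) = 0$, rewritten in the same non-negative-summand form, forces $u(x_{j+1}) = 1$ and $e(x_j, v_*) = 0$. Applied at $j = k-1$, the latter conclusion contradicts $x_{k-1} \sim v_*$, closing the argument. The main pieces of care in the write-up are invoking the maximum principle cleanly, handling the multigraph edge-multiplicities, and verifying the short-path edge case $k = 1$ together with the possibility that $v$ itself is a neighbor of some $x_j$ (which is harmless because $u(v) = 1$ coincides with the propagated value); none of these pose an essential obstacle.
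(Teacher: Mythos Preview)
Your proof is correct, and it shares with the paper's the same opening moves: both choose the sandpile $\sigma = d - 1 + e_v$ and reduce via Theorem~\ref{thm:1} to showing that $u = (L')^{-1} e_v$ is not integral. The divergence is in how that non-integrality is established. The paper argues combinatorially: invoking the spanning-forest interpretation of Corollary~\ref{cor:L'inverse}, it identifies the $v$-component $u(v)$ with the ratio $\vert\Ss_2(v,v)\vert / \vert\Ss_1\vert$ and then shows $0 < \vert\Ss_2(v,v)\vert < \vert\Ss_1\vert$ by exhibiting, under hypotheses~(a) and~(b), a nonempty $\Ss_2(v,v)$ together with a non-surjective injection $\Ss_2(v,v) \hookrightarrow \Ss_1$. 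Your argument is instead potential-theoretic: you treat $u$ as a discrete Green's function, invoke the maximum principle to pin down $0 \leq u \leq u(v)$, and then, assuming integrality, force $u(v)=1$ and propagate the value $1$ along a $v$-avoiding path to $v_*$ via harmonicity, reaching a contradiction.

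What each approach buys: the paper's route draws directly on the matrix-tree machinery developed in Section~\ref{sec:Laplacian} and makes explicit that $u(v) \in (0,1)$, tying the result to the graph's spanning-forest structure. Your route is more self-contained --- it needs only the maximum principle and the equation $L'u = e_v$, not the forest-counting formulas --- and is arguably the more natural argument from a discrete-harmonic-analysis standpoint. One small point worth making explicit in a full write-up: the inequality $u(w) \leq u(v)$ is exactly the statement that the Green's function $(L')^{-1}$ is maximized on its diagonal, which follows from the maximum principle applied on the connected graph $\Gamma$ with $u$ extended by $u(v_*)=0$; you allude to this but the details (ruling out an interior maximum by propagating to $v$ or $v_*$) deserve a sentence or two.
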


Our proof uses the hypotheses on $v$ to construct a uniformly large sandpile $\sigma$ for which $(L')^{-1}(\sigma - d+1)$ is not integral, at which point the result is immediate from Theorem \ref{thm:1}. That said, these hypotheses are by no means necessary (see Example \ref{ex:path4}) and it seems it is rare for graphs to only admit immutable sandpiles. Nevertheless, these hypotheses cannot be altogether dropped either. For example, we will see in Examples \ref{ex:multik2} and \ref{ex:immutablepath} that when $\Gamma = P_k$ is the path on $k = 2$ or $k = 3$ vertices, then there are no mutable sandpiles. These graphs fail the hypotheses of the corollary since every vertex on $P_2$ fails (b), while every vertex on $P_3$ fails either (a) or (b).

The questions in Q2--Q4 are more global in nature, and so are addressed by Theorem \ref{thm:1} only for the class of sandpiles that are uniformly large. Nevertheless, within this class of sandpiles, Theorem \ref{thm:1} is entirely satisfactory. Indeed, for Q3, the integrality of $(L')^{-1}(\sigma - d + 1)$ is a theoretically-pleasing condition for immutability, certainly when compared to the definition itself (in which one minimizes over solutions of a system of matrix inequalities). For Q4, the set of immutable, uniformly large sandpiles can be identified with the set of $w \in \bb{Z}^{\V'}$ with $L'(w) \geq 0$; that is, it can be identified with the integer points of a polytope. We turn finally to Q2. This question is a heuristic one, so we give it a heuristic answer: Except for rare cases, the inverse reduced Laplacian $(L')^{-1}$ is non-integral. Moreover, for a sandpile $\sigma$ on any such $\Gamma$, the integrality of the quantity $(L')^{-1}(\sigma - d + 1)$ is generally much less likely than its non-integrality (by roughly a factor of the largest reduced denominator appearing in $(L')^{-1}$). That is, with the exception of a few special cases, it is much more likely for a uniformly large sandpile to be mutable than for it to be immutable.

Our next result restricts to a special class of graphs whose additional structure provides more refined information than that which is afforded by Theorem \ref{thm:1}; see Section \ref{sec:Proof3} for a proof.

\begin{corollary}\label{cor:2} 
Suppose $\Gamma$ is the cone of a regular graph, and choose the sink to be the cone point. Let $\sigma \in \bb{Z}^{\V'}$.
\begin{enumerate}
\item[(a)] Assume $\sigma$ is immutable and uniformly large. Then $\sigma = L' a$ for some uniformly large $a \in \bb{Z}^{\V'}$. 
\item[(b)] Assume $\sigma \geq 0$ and $\sigma = L' a$ for some uniformly large $a \in \bb{Z}^{\V'}$. Then $\sigma$ is immutable.
\end{enumerate}
In both cases, $u_\sigma^{\bb{R}} = a - d +1$. 
\end{corollary}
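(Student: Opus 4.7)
The proof hinges on two special features of the cone-of-regular-graph setting. First, the degree map $d$ is constant on $\V'$; writing $r$ for its common value, we have $d-1 = (r-1)\mathbf{1}$ as vectors, where $\mathbf{1} \in \bb{Z}^{\V'}$ is the all-ones vector. Second, because each non-sink vertex is joined to the sink by a single edge,
\[
L'\mathbf{1} \;=\; \mathbf{1},
\]
and consequently $L'(d-1) = d-1$. I will also use the standard fact that $(L')^{-1}$ has nonnegative entries, so that $L'z \geq 0$ implies $z \geq 0$.

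For part (a), since $\sigma$ is uniformly large and immutable, Theorem \ref{thm:1} supplies $w := (L')^{-1}(\sigma - d + 1) = u_\sigma^{\bb{R}} \in \bb{Z}^{\V'}$, with $w \geq 0$ because odometers are nonnegative. Define $a := w + d - 1$. Then $a \in \bb{Z}^{\V'}$, $a \geq d - 1$, and
\[
L'a \;=\; L'w + L'(d-1) \;=\; (\sigma - d + 1) + (d-1) \;=\; \sigma,
\]
which gives (a), together with $u_\sigma^{\bb{R}} = w = a - d + 1$.

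For part (b), Theorem \ref{thm:1} cannot be invoked directly, because $\sigma$ is not assumed uniformly large. Instead I appeal to the least-action characterization of $u_\sigma^{\bb{R}}$ (reviewed in Section \ref{sec:TheOdometer}) as the pointwise-smallest nonnegative $u \in \bb{R}^{\V'}$ satisfying $L'u \geq \sigma - d + 1$. Setting $u := a - d + 1 \geq 0$ gives $L'u = L'a - L'(d-1) = \sigma - (d-1) = \sigma - d + 1$, so $u$ is feasible. For minimality, let $v \geq 0$ be any other feasible vector and set $a' := v + d - 1 \geq d - 1$. Then
\[
L'a' \;=\; L'v + L'(d-1) \;\geq\; (\sigma - d + 1) + (d-1) \;=\; \sigma \;=\; L'a,
\]
so $L'(a' - a) \geq 0$, hence $a' \geq a$ by nonnegativity of $(L')^{-1}$, hence $v \geq u$. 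Therefore $u_\sigma^{\bb{R}} = u = a - d + 1 \in \bb{Z}^{\V'}$; being $\bb{Z}$-feasible, this vector also equals $u_\sigma^{\bb{Z}}$, so $\sigma$ is immutable.

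I expect the main obstacle to be part (b), where one must argue directly from the optimization definition of the odometer rather than a closed-form expression as in part (a). Everything else is algebraic manipulation organized around the identity $L'(d-1) = d-1$, which is what makes the shift $a \mapsto u = a - d + 1$ on the input side correspond to the shift $\sigma \mapsto \sigma - d + 1$ on the output side; this identity is specific to the cone-of-regular-graph setting and fails in general.
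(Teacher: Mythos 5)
Your proof is correct. Part (a) is essentially the paper's argument: both reduce to Theorem \ref{thm:1} plus the identity $L'(d-1)=d-1$ (the paper's Lemma \ref{lem:2}), and your $a = u_\sigma^{\bb{R}} + d - 1$ is exactly the paper's $a = (L')^{-1}\sigma$. Part (b) takes a genuinely different route to the key fact $u_\sigma^{\bb{R}} = a - d + 1$ (the paper's Lemma \ref{lem:3}). The paper gets the minimality direction by a bootstrapping trick: it applies Theorem \ref{thm:1} to the shifted sandpile $\sigma + (d-1)$, which is uniformly large precisely because $\sigma \geq 0$, and then transfers the answer back via $L'(d-1)=d-1$. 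You instead verify minimality directly from the least-action characterization: any feasible $v$ satisfies $L'v \geq \sigma - d + 1 = L'u$, and nonnegativity of the entries of $(L')^{-1}$ (Remark \ref{Rem:L'inverse}) converts $L'(v-u)\geq 0$ into $v \geq u$. (The detour through $a' = v + d - 1$ is harmless but unnecessary.) Your argument is more self-contained and, as a bonus, shows that the formula $u_\sigma^{\bb{R}} = a - d + 1$ needs only $a \geq d-1$, not $\sigma \geq 0$; the paper's version leans on the already-proved Theorem \ref{thm:1} and so avoids re-running a minimality argument. The final step --- integrality of $a - d + 1$ plus feasibility forces $u_\sigma^{\bb{Z}} = u_\sigma^{\bb{R}}$ --- is the paper's Lemma \ref{lem:1} and is handled the same way in both.
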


With $\Gamma$ as in Corollary \ref{cor:2}, this corollary makes an unexpected tie with the critical group $K(\Gamma) \defeq \mathrm{coker}(L': \bb{Z}^{\V'} \rightarrow \bb{Z}^{\V'})$: If $\sigma$ is uniformly large, then $\sigma$ is immutable if and only if $\sigma$ is a representative of the identity in $K(\Gamma)$.

The characterization of immutable sandpiles in Theorem \ref{thm:1} and Corollary \ref{cor:2} essentially requires inverting the reduced Laplacian $L'$. Of course, the theoretical and computational tools available for this are vast; however, these strategies can be computationally taxing since inverting matrices is a highly non-linear operation. It would be practically convenient to have a criterion for immutability that one could check directly, without having to invert a matrix. This would also be conceptually pleasing since, despite the elegance of the immutability criterion expressed in these results, precisely which class of sandpiles this identifies remains partially hidden behind the veil of inverting $L'$. Towards this end, in Section \ref{sec:Examples}, we give direct criteria for immutability of uniformly large sandpiles on trees, complete graphs, and wheel graphs. Our computations in Section \ref{sec:Examples} are rooted in an extension of Kirchoff's matrix tree theorem (see Section \ref{sec:Laplacian}) that gives a spanning tree-type interpretation of an arbitrary minor of the Laplacian matrix.

\medskip

Through the results and examples mentioned above, we gain a fairly detailed picture of immutable and mutable sandpiles that are either uniformly large or stable (which can be viewed as ``uniformly small''). Ideally, one would have a characterization of immutability in the absence of our uniformly large hypothesis (ULH) of Theorem \ref{thm:1}. Corollary \ref{cor:2} (b) (cf. Example \ref{ex:expanded}), and Examples \ref{ex:multik2} and \ref{ex:immutablepath} give results in this direction, but their scope is limited. As we have suggested above, throughout this paper, we probe the question of the necessity of our hypotheses through various examples that were particularly chosen to be simple, yet instructive. The moral of these examples is that the characterizations of immutability given in Theorem \ref{thm:1} and Corollary \ref{cor:2} would need to be significantly altered if the ULH were dropped. As such, the class of sandpiles that are neither uniformly large nor stable is not deeply explored here, and we leave for future work a more complete investigation of immutability for this class. We note also that it is precisely this class of sandpiles that produce the beautiful pictures in \cite{LP2,LP}; indeed, these pictures are the stabilizations of mutable sandpiles that are point-masses on compactifications of $\bb{Z}^n$.

\medskip

\noindent {\bf Acknowledgements.} The authors would like to thank Joshua Ducey for introducing us to sandpiles and for his help with early drafts of this manuscript. Thanks are also due to the 2019 REU Team at James Madison University: Jawahar Madan, Eric Piato, Christina Shatford, and Angela Vichitbandha.

This work was partially supported by the Jeffrey E. Tickle '90 Family Endowment in Science \& Mathematics, and by NSF Grant Number NSF-DMS 1560151.

\section{The Laplacian and its minors}\label{sec:Laplacian}

Let $\Gamma$ and $v_*$ be as in the introduction, and write $\V = \V_\Gamma$ for the vertex set of $\Gamma$. For $u \in \bb{Z}^\V$, define $Lu: \V \rightarrow \bb{Z}$ to be the function
$$(L u)(v) = \sumd{w \in \V} L_{vw} u(w), \indent L_{vw} = \left\{ \begin{array}{ll}
																															d(v) & \textrm{if $v = w$}\\
																															-\#(v, w) & \textrm{if $v \neq w$}
																															\end{array}\right.$$
where $\#(v, w) \in \bb{Z}$ is the number of edges connecting $v$ and $w$. The association $u \mapsto Lu$ defines a group homomorphism $L: \bb{Z}^{\V} \rightarrow \bb{Z}^{\V}$, which is the \emph{Laplacian} of $\Gamma$. Let $\pi: \bb{Z}^\V \rightarrow \bb{Z}^{\V'}$ be the projection, where $\V' = \V \backslash \left\{v_* \right\}$. Since $\Gamma$ is connected, this projection restricts to the subset 
$$\bb{Z}^{\V}_0 \defeq \Big\{u \in \bb{Z}^\V \; \Big| \; \sumd{v \in \V} u(v) = 0 \Big\}$$
to yield a group isomorphism $\pi \vert: \bb{Z}^{\V}_0 {\rightarrow} \bb{Z}^{\V'}$. Denote by $\iota: \bb{Z}^{\V'} \rightarrow \bb{Z}^{\V}_0 \subseteq \bb{Z}^\V$ the inverse of this isomorphism. The \emph{reduced Laplacian} is the composition
$$L' = L'_\Gamma \defeq \pi \circ L \circ \iota: \bb{Z}^{\V'} \longrightarrow \bb{Z}^{\V'}.$$

Suppose $G$ is an abelian group. Viewing $G$ as a $\bb{Z}$-module, we can interpret $L$ as defining a group homomorphism $G^{\V} \rightarrow G^{\V}$ that we denote by the same symbol $L$; similar statements hold for $L'$. When we wish to emphasize the group $G$, we will say we are \emph{working over $G$}. When working over $\bb{R}$, the operator $L'$ is invertible (e.g., see Corollary \ref{cor:kirchoff}). Thus, when working over any subgroup $G \subseteq \bb{R}$, the operator $L'$ is injective. 

We assume $\Gamma$ has $n+1$ vertices with $n \geq 1$. Given an enumeration $v_0 = v_*, v_1 , \ldots, v_n$ of the vertices, we obtain identifications $G^{\V} \cong G^{n+1}$ and $G^{\V'} \cong G^n$, with the projection $G^{\V} \rightarrow G^{\V'}$ corresponding to the projection $G^{n+1} \rightarrow G^n$ to the last $n$ components. Relative to these identifications, we can view $L$ (resp. $L'$) as an $(n+1) \times (n+1) $-matrix (resp. $n \times n$-matrix) with integer entries. The matrix $L'$ is obtained from $L$ be deleting the first row and the first column. 
 
 For the rest of this section, we fix an enumeration of the vertices of $\Gamma$ and we work over $\bb{R}$. Our main goal is to understand the components of the inverse matrix $(L')^{-1}$; this is formalized in Corollary \ref{cor:L'inverse}. Our approach is to prove an extension of Kirchoff's matrix tree theorem, Theorem \ref{thm:kirchoff}, from which Corollary \ref{cor:L'inverse} will readily follow.

\subsection{A generalization of Kirchoff's matrix tree theorem}\label{sec:Kirchoff}

A \emph{$k$-forest} is a collection $\left\{\T_0, \ldots, \T_{k-1} \right\}$ of pairwise disjoint subgraphs $\T_i$ of $\Gamma$ so that each $\T_i$ is a (connected) tree with at least one vertex. We will say a $k$-forest is \emph{spanning} if each vertex of $\Gamma$ lies in one of the trees. (We caution the reader that our use of the term ``spanning $k$-forest'' is a weaker notion than what is often meant by the term ``spanning forest''.) A spanning 1-forest is the same as a spanning tree, and any spanning $k$-forest determines a spanning $(k+1)$-forest by deleting an edge (assuming the $k$-forest has at least one edge). Since $\Gamma$ is connected with $n+1$ vertices, it follows that a $k$-forest is spanning if and only if there are $n+1-k$ edges appearing in the union of the trees. A spanning $(n+1)$-forest consists only of the vertices, each viewed as a tree with no edges. There are no spanning $k$-forests for $k > n+1$.  

The following notation will help expedite our discussion below of spanning 1- and 2-forests, which are the most important cases for us.

\begin{notation}\label{def:spanningsets}
Let $\Ss_1$ be the set of all spanning 1-forests (spanning trees) of $\Gamma$.

\medskip

Fix $v_* \in \V$ and $v, w \in \V'$. Let $\Ss_2(v, w)$ be the set of ordered pairs $(\T_0, \T_1)$, where $\{\T_0, \T_1 \}$ is a spanning 2-forest of $\Gamma$, $v_* \in \T_0$, and $v, w \in \T_1$. \hfill $\Diamond$
\end{notation}

Let $M$ be a $k \times \ell$ matrix and $0 \leq r \leq \min(k, \ell)$. Fix subsets 
$$E \subseteq \left[ k \right] , \indent V \subseteq \left[\ell \right]$$ 
each with a complement of size $ r$. Denote by 
$$M_E^V$$ 
the $r \times r$ matrix obtained from $M$ by deleting all rows indexed by $E$ and all columns indexed by $V$. (Our slightly odd notation convention is adopted to mitigate the need for even more cumbersome notation below.) If either $E$ or $V$ is empty, then we drop it from the notation. 

\begin{example}\label{ex:cof}
The $(i, j)$-minor of a matrix $M$ is often written as $M_{i,j}$. In terms of the above row/column deletion notation, this is
$$M_{i,j} = \det\left(M^{\left\{j \right\}}_{\left\{i \right\}}\right).$$
\hfill $\Diamond$ \end{example}

We will first apply this notation to the case where $M = L$ and $M = L'$, and it is worth discussing our indexing conventions in these cases. Towards this end, we note that $L$ has rows and columns indexed by $\left\{v_0, \ldots, v_n \right\} \cong \left[n+1 \right] = \left\{1, \ldots, n +1 \right\}$, so $v_i$ corresponds to the $(i+1)$st row or column. Similarly, the matrix $L'$ has rows and columns indexed by $\left\{v_1, \ldots,  v_n \right\} \cong \left[ n \right] = \left\{1, \ldots, n \right\}$, so $v_i$ corresponds to the $i$th row or column. 

\begin{example}\label{ex:L'}
The reduced Laplacian
$$L' = L^{\left\{1 \right\}}_{\left\{1 \right\}}$$
is obtained from $L$ by deleting first row and first column. The $(i, j)$-minor $L'_{i,j}$ of $L'$ can be written in terms of $L'$ or $L$ as
$$L'_{i, j} = \det\left((L')^{\left\{j \right\}}_{\left\{i \right\}}\right) = \det\left(L^{\left\{1, j+1 \right\}}_{\left\{1, i+1 \right\}}\right)$$
\hfill $\Diamond$ \end{example}

Now we can state our main result of this section.

\begin{theorem}\label{thm:kirchoff}
Fix $1 \leq r \leq n = \vert \V \vert - 1$, as well as subsets $V, W \subseteq \V$ having complements of size $r$. Then $\vert \det(L^V_W) \vert$ is the number of spanning $\vert V \vert$-forests $\big\{ \T_0, \ldots, \T_{\vert V \vert - 1} \big\}$, where each $\T_i$ contains exactly one element of $V$ and exactly one element of $W$.
\end{theorem}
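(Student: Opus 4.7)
The plan is to combine the Cauchy--Binet formula with the classical analysis of when a minor of the signed incidence matrix is nonzero. Fix an arbitrary orientation of each edge of $\Gamma$ and let $D$ be the associated $(n+1) \times m$ signed incidence matrix, where $m$ is the number of edges. A direct computation gives the standard factorization $L = D D^T$. Writing $D_{U, S}$ for the submatrix of $D$ with rows indexed by $U \subseteq \V$ and columns indexed by an edge set $S$, one checks that
\[ L^V_W \;=\; D_{W^c, \bullet} \cdot \bigl( D_{V^c, \bullet} \bigr)^T, \]
where $U^c$ denotes the complement in $\V$. Since each factor has $r = |V^c| = |W^c|$ rows, the Cauchy--Binet formula gives
\[ \det\bigl( L^V_W \bigr) \;=\; \sum_{S} \det(D_{W^c, S}) \cdot \det(D_{V^c, S}), \]
with $S$ ranging over $r$-element subsets of the edge set of $\Gamma$.

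The second step is to apply the classical ``nonsingular minor'' lemma for incidence matrices: for any $U \subseteq \V$ and any set of edges $S$ with $|U| = |S| = r$, one has $\det(D_{U, S}) \in \{0, \pm 1\}$, and this value is nonzero if and only if the subgraph $(\V, S)$ is a spanning forest whose $n+1-r$ components each contain exactly one vertex of $U^c$. Applying this with $U = W^c$ and with $U = V^c$, the summand indexed by $S$ is nonzero precisely when $(\V, S)$ is a spanning $(n+1-r)$-forest each of whose trees contains exactly one vertex of $V$ and exactly one vertex of $W$. Since $|V| = |W| = n+1-r$ equals the number of trees, these are exactly the spanning $|V|$-forests described in the statement.

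The main obstacle is controlling the sign in the Cauchy--Binet expansion: a priori the sum is $\pm$-signed, and to recover the forest count one must show that $\det(D_{W^c, S}) \cdot \det(D_{V^c, S})$ takes the same sign for every $S$ that yields a valid forest, so that the absolute value of the sum equals the number of such forests. I plan to handle this by fixing a reference valid forest $S_0$ and checking that any other valid $S$ is obtained from $S_0$ by a sequence of elementary edge swaps, each of which preserves the product of the two determinants. Equivalently, given a valid spanning forest one can simultaneously orient each tree toward its distinguished $V$-vertex and toward its distinguished $W$-vertex, and a direct bookkeeping argument (expanding each minor along the rows according to the tree structure and comparing the two permutations) shows that the product of determinants depends only on the chosen edge orientation and the labeling data, not on $S$ itself. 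Once sign consistency is in hand, taking absolute values in the Cauchy--Binet sum completes the proof.
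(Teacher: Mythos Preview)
Your outline matches the paper's proof: factor $L^V_W$ as a product of two incidence minors, apply Cauchy--Binet, and invoke the characterization of nonsingular incidence minors (which the paper states and proves as a separate lemma). The only place your proposal is soft is the sign consistency, which you correctly identify as the crux but leave as a sketch.

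The paper handles the sign more directly than either of your proposed routes. Rather than edge swaps or dual tree orientations, it records the elementary fact that for the incidence matrix $B(T)$ of a tree with ordered vertices $w_1,\dots,w_\ell$, deleting column $w_i$ versus column $w_j$ changes the determinant by exactly $(-1)^{i+j}$. A valid forest pairs each $V$-vertex $v_{i_k}$ with the unique $W$-vertex $v_{t(i_k)}$ lying in the same tree, and applying this observation tree-by-tree yields
\[
\det(B_E^W)\,\det(B_E^V)\;=\;\prod_k (-1)^{\,i_k+t(i_k)}\;=\;(-1)^{\sum_k i_k+\sum_k j_k},
\]
since $t$ is a bijection onto $\{j_1,\dots,j_{|V|}\}$. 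The right-hand side depends only on the global index sets of $V$ and $W$, not on which forest $E$ realizes the pairing, so sign consistency is immediate and you get an explicit sign formula for free. This is shorter than an edge-swap induction and sidesteps the bookkeeping your second approach would require.
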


\begin{remark}\label{rem:sign}
The sign of $\det(L^V_W)$ is given in (\ref{eq:sign}). For reference below, we highlight two special cases. The first is the case in which $V= W$ are equal. Then 
\begin{equation}\label{eq:VV}
\det(L^V_V) = \vert \det(L^V_V)\vert
\end{equation}
is non-negative. The other special case is if $V = \left\{v_0,v_j\right\} \cong \left\{1, j+1 \right\}$ and $W = \left\{v_0,v_i\right\} \cong \left\{1, i+1 \right\}$ have size 2, and share at least one vertex in common. Then the sign of $\det(L^V_W)$ is given by 
\begin{equation}\label{eq:ij}
\det(L^V_W) = (-1)^{i+j}\vert \det(L^V_W)\vert.
\end{equation}
\hfill $\Diamond$
\end{remark}

We prove Theorem \ref{thm:kirchoff} and Remark \ref{rem:sign} in Section \ref{sec:ProofOfThmKirch}, after we give two corollaries. The first corollary of Theorem \ref{thm:kirchoff} is the following classic result.

\begin{corollary}\label{cor:kirchoff}[Kirchoff's Matrix Tree Theorem]
The determinant $\det(L') = \vert \Ss_1 \vert$ is equal to the number of spanning trees of $\Gamma$. In particular, $\det(L') \geq 1$.
\end{corollary}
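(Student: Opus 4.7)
The plan is to deduce this corollary directly from Theorem \ref{thm:kirchoff} together with the special case of the sign computation recorded in equation (\ref{eq:VV}) of Remark \ref{rem:sign}. First, I would recall from Example \ref{ex:L'} the identification $L' = L^{\{1\}}_{\{1\}}$, so that $\det(L')$ is a minor of $L$ of the form covered by Theorem \ref{thm:kirchoff} with parameters $V = W = \{v_0\} = \{1\}$ and $r = n$.

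Next, I would apply Theorem \ref{thm:kirchoff} with these choices. Here $\vert V \vert = 1$, so the theorem asserts that $\vert \det(L^V_W) \vert$ counts spanning $1$-forests, i.e.\ spanning trees $\T$ of $\Gamma$, subject to the constraint that $\T$ contain exactly one element of $V$ and exactly one element of $W$. Since $V = W = \{v_0\}$ and every spanning tree of $\Gamma$ automatically contains the single vertex $v_0$, this constraint is vacuous, and the count is simply $\vert \Ss_1 \vert$. Thus $\vert \det(L') \vert = \vert \Ss_1 \vert$.

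To remove the absolute value, I would invoke equation (\ref{eq:VV}), which in the case $V = W$ gives $\det(L^V_V) = \vert \det(L^V_V) \vert$, so $\det(L') = \vert \Ss_1 \vert$. Finally, connectedness of $\Gamma$ guarantees the existence of at least one spanning tree, so $\vert \Ss_1 \vert \geq 1$ and therefore $\det(L') \geq 1$.

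I do not anticipate any real obstacle here: the content of this corollary is entirely packaged into Theorem \ref{thm:kirchoff} and the sign statement (\ref{eq:VV}). The only thing to be careful about is matching the indexing conventions of Example \ref{ex:L'} (where rows and columns of $L$ are indexed by $\{1,\ldots,n+1\}$ with $v_0$ corresponding to index $1$) so that the choice $V = W = \{1\}$ does correctly pick out the sink vertex and reproduce $L'$. All of the actual combinatorial work happens in the (deferred) proof of Theorem \ref{thm:kirchoff}.
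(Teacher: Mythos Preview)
Your proposal is correct and follows essentially the same approach as the paper's proof: apply Theorem \ref{thm:kirchoff} and Remark \ref{rem:sign} with $V = W = \{v_0\}$ consisting only of the sink. You have simply spelled out in more detail why the constraint in Theorem \ref{thm:kirchoff} is vacuous in this case and why connectedness gives the final inequality.
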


\begin{proof}[Proof of Corollary \ref{cor:kirchoff}]
The corollary is immediate from Theorem \ref{thm:kirchoff} and Remark \ref{rem:sign} with $V =W =  \left\{v_0 \right\}$ consisting only of the sink. 
\end{proof}

The next corollary gives the above-mentioned graph-theoretic interpretation of the entries of the inverse of $L'$. 

\begin{corollary}\label{cor:L'inverse}
The $(i,j)$-component of $(L')^{-1}$ is $(L')^{-1}_{ij} = \det(L')^{-1} \vert \Ss_2(v_i, v_j)\vert$. 
\end{corollary}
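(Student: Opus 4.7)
The plan is to combine the classical cofactor (adjugate) formula for matrix inverses with the two ingredients developed in this section: the reduced-Laplacian minor identity from Example \ref{ex:L'}, and the signed matrix-tree theorem (Theorem \ref{thm:kirchoff}, with Remark \ref{rem:sign}). In broad strokes, the cofactor formula expresses each entry of $(L')^{-1}$ as a signed minor of $L'$; Example \ref{ex:L'} then rewrites this as a minor of the full Laplacian $L$ obtained by deleting two rows and two columns (always including the row and column corresponding to the sink $v_0$); finally, Theorem \ref{thm:kirchoff} interprets the absolute value of this minor as $\lvert \Ss_2(v_i,v_j)\rvert$.

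More concretely, first I would invoke Cramer's rule to write
$$
(L')^{-1}_{ij} \;=\; \frac{(-1)^{i+j}}{\det(L')}\,\det\bigl((L')^{\{i\}}_{\{j\}}\bigr).
$$
Next, applying Example \ref{ex:L'} (with the roles of $i$ and $j$ swapped appropriately), I would rewrite the minor on the right-hand side as
$$
\det\bigl((L')^{\{i\}}_{\{j\}}\bigr) \;=\; \det\bigl(L^{\{1,\,i+1\}}_{\{1,\,j+1\}}\bigr),
$$
so the sink row/column is included among the rows/columns removed from $L$.

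The third step is to apply Theorem \ref{thm:kirchoff} with $V=\{v_0,v_i\}$ and $W=\{v_0,v_j\}$. Since $v_0$ lies in both $V$ and $W$, the condition that each tree of the spanning $2$-forest contains exactly one element of $V$ and one of $W$ forces one tree to contain $v_0$ (as its only element from each set) and the other tree to contain both $v_i$ and $v_j$; this is exactly the set $\Ss_2(v_i,v_j)$ of Notation \ref{def:spanningsets}. Hence $\bigl\lvert\det\bigl(L^{\{1,i+1\}}_{\{1,j+1\}}\bigr)\bigr\rvert = \lvert\Ss_2(v_i,v_j)\rvert$.

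The final step, which I expect to be the only point requiring care, is sign bookkeeping. Here I would invoke equation (\ref{eq:ij}) of Remark \ref{rem:sign}: since $V$ and $W$ have size $2$ and share $v_0$, we have
$$
\det\bigl(L^{\{1,i+1\}}_{\{1,j+1\}}\bigr) \;=\; (-1)^{i+j}\,\lvert\Ss_2(v_i,v_j)\rvert.
$$
Substituting back, the two factors of $(-1)^{i+j}$ cancel, yielding $(L')^{-1}_{ij} = \det(L')^{-1}\lvert\Ss_2(v_i,v_j)\rvert$ as claimed. The only subtlety is verifying that equation (\ref{eq:ij}) applies with the $V,W$ we need (its statement in Remark \ref{rem:sign} is phrased with $V=\{v_0,v_j\}$, $W=\{v_0,v_i\}$, but the sign $(-1)^{i+j}$ is symmetric in $i$ and $j$, so this is immediate).
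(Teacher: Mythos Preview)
Your proposal is correct and follows essentially the same approach as the paper's proof: both use the cofactor formula for $(L')^{-1}$, invoke Example \ref{ex:L'} to express the relevant minor as a double-row/column deletion of $L$, apply Theorem \ref{thm:kirchoff} to identify its absolute value with $\lvert\Ss_2(v_i,v_j)\rvert$, and use equation (\ref{eq:ij}) of Remark \ref{rem:sign} to cancel the sign. Your write-up simply unpacks in more detail what the paper compresses into a single sentence.
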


\begin{remark}\label{Rem:L'inverse}
Corollary \ref{cor:L'inverse} implies that the entries of $(L')^{-1}$ are non-negative, which is an observation that has been known for some time \cite{Fiedler} \cite[Ch. 6]{BerPlem} and will turn out to be crucial in our analysis below. 
\hfill $\Diamond$ \end{remark}

\begin{proof}[Proof of Corollary \ref{cor:L'inverse}]
By the cofactor expansion formula, the $(i, j)$-component of the inverse of $L'$ is given by $(-1)^{i+j} (\det(L'))^{-1} L'_{i,j}$, where $L'_{i, j}$ is the minor. The result now follows from Example \ref{ex:L'}, Remark \ref{rem:sign}, Theorem \ref{thm:kirchoff}, and the definition of $\Ss_2(v_i, v_j)$. 
\end{proof}

\begin{example}\label{path irl} 
Consider the case where $\Gamma = P_{n+1}$ is a path on $n+1$ vertices, with sink $v_*$ at one of the two endpoints. Order the remaining vertices $v_1, \ldots, v_n$ linearly, with $v_1$ adjacent to $v_*$. We claim that
\begin{equation} \label{irl path}
(L')^{-1} =  \begin{pmatrix}
						1 & 1 & 1 & \dots & 1 \\
						1 & 2 & 2 & \dots & 2 \\
						1 & 2 & 3 & \dots & 3 \\
						\vdots & \vdots & \vdots & \ddots & \vdots \\
						1 & 2 & 3 & \dots & n
						\end{pmatrix}
\end{equation}
To see this, first note that by Kirchoff's Matrix Tree Theorem, we have $\det(L') = 1$. It follows from Corollary \ref{cor:L'inverse} that the $(i, j)$-component of the inverse of $L'$ equals $\vert \Ss_2(v_i, v_j) \vert$. We will show that $\vert \Ss_2(v_i, v_j) \vert = \min(i, j)$. Indeed, fixing $( \T_0, \T_1 ) \in \Ss_2(v_i, v_j)$, we observe the following:
\begin{enumerate}
    \item The tuple $(\T_0, \T_1 )$ is uniquely determined by the vertex in $\T_1$ with the smallest index.
    \item If $\min(i,j) \leq \ell \leq n$, then the vertex $v_\ell$ is in $\T_{1}$. In particular, the smallest-index vertex in $\T_1$ has index at most $\min(i, j)$.
\end{enumerate}
It follows that the assignment
    \[ f((\T_{0},\T_{1})) = \min(\ell\; \vert \; v_{\ell} \in \T_{1}) \]
    gives a well-defined map $f: \Ss_2(v_i, v_j) \rightarrow \left\{1, \ldots, \min(i, j) \right\}$. See Figure \ref{figure1}. It is not hard to see that $f$ is a bijection, so $(L')^{-1}_{ij} = \min(i, j)$ as desired. 
\hfill $\Diamond$ \end{example}

\bigskip

\begin{figure}[h]
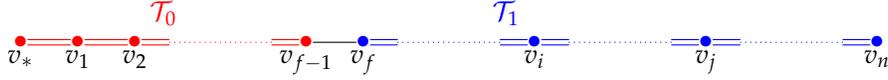

\centerline{\xy /r1.8pc/:,
(0,0)*={\xy
	(-6,0)*[red]{\bullet}="A0";
		(-5,0)*[red]{\bullet}="A1";
		(-4,0)*[red]{\bullet}="A2";
		(-3.4,0)*[red]{}="A3";
		(-1.6,0)*[red]{}="A4";
		(-1,0)*[red]{\bullet}="A5";
		(0,0)*[blue]{\bullet}="A6";
		(.6,0)*[blue]{}="A7";
		(2.4,0)*[blue]{}="A8-";
		(3,0)*[blue]{\bullet}="A8";
		(3.6,0)*[blue]{}="A8+";
		(5.4,0)*[blue]{}="A9-";
		(6,0)*[blue]{\bullet}="A9";
		(6.6,0)*[blue]{}="A9+";
		(8.4,0)*[blue]{}="A10";
		(9,0)*[blue]{\bullet}="A11";
		"A0"*+!U{\hbox{${{v_*}}$}};
		"A1"*+!U{\hbox{${{v_1}}$}};
		"A2"*+!U{\hbox{${{v_2}}$}};
		"A5"*+!U{\hbox{${{v_{f-1}}}$}};
		"A6"*+!U{\hbox{${{v_{f}}}$}};
		"A8"*+!U{\hbox{${{v_{i}}}$}};
		"A9"*+!U{\hbox{${{v_{j}}}$}};
		"A11"*+!U{\hbox{${{v_{n}}}$}};
		"A0";"A1" **[red]\dir2{-};
		"A1";"A2" **[red]\dir2{-};
		"A2";"A3" **[red]\dir2{-};
		"A3";"A4" **[red]\dir{.};
		"A4";"A5" **[red]\dir2{-};
		"A5";"A6" **\dir{-};
		"A6";"A7" **[blue]\dir2{-};
		"A7";"A8-" **[blue]\dir{.};
		"A8-";"A8" **[blue]\dir2{-};
		"A8";"A8+" **[blue]\dir2{-};
		"A8+";"A9-" **[blue]\dir{.};
		"A9-";"A9" **[blue]\dir2{-};
		"A9";"A9+" **[blue]\dir2{-};
		"A9+";"A10" **[blue]\dir{.};
		"A10";"A11" **[blue]\dir2{-};
(-3.5,.5)*[red]{\T_0};
(2.5,.5)*[blue]{\T_1};
\endxy};
\endxy}
\vspace{.2cm}
\caption{Pictured here is the case $f < i \leq j$, where we have set $f \defeq f((\T_{0},\T_{1}))$. The edges of each tree have been doubled for emphasis. The vertices and edges of the tree $\T_0$ are indicated in red, and those of the tree $\T_1$ are in blue; note that the edge from $v_{f-1}$ to $v_f$ is not contained in either tree. \hfill $\Diamond$}
\label{figure1}
\end{figure}

\subsection{Proof of Theorem \ref{thm:kirchoff}}\label{sec:ProofOfThmKirch}

Fix an enumeration on the edge set $\E$, and an orientation on each edge in $\E$. Let $B: \bb{Z}^\V \rightarrow \bb{Z}^\E$ be the associated incidence matrix (note that our convention is that the columns of $B$ are indexed by the vertices $\V$ and rows are indexed by the edges $\E$). Our main tool for proving Theorem \ref{thm:kirchoff} is the following.

\begin{lemma}\label{lem:kirchoff}
Fix $1 \leq r \leq \min(\vert \E \vert, \vert \V \vert)$, as well as subsets $E \subseteq \E$ and $V \subseteq \V$ with complements of size $r$. Then the following are equivalent for the incidence matrix $B$:
\begin{itemize}
\item[$(a)$] $\det(B^V_E) = \pm 1$;
\item[$(b)$] $\det(B^V_E) \neq 0$;
\item[$(c)$] The edges of $\E \backslash E$ are those of a spanning $\vert V \vert$-forest $\big\{\T_0, \ldots, \T_{\vert V \vert -1} \big\}$, where each $\T_i$ contains exactly one vertex of $V$. 
\end{itemize}
\end{lemma}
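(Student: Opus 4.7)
The plan is to establish the cycle of implications (a) $\Rightarrow$ (b) $\Rightarrow$ (c) $\Rightarrow$ (a). The first is immediate. Both of the remaining implications rest on the same structural observation: setting $F \defeq \E \setminus E$, each row of $B^V_E$ (indexed by an edge $e \in F$) has nonzero entries only in the columns of the two endpoints of $e$. Consequently, if we group rows by the connected component of $(\V, F)$ containing the corresponding edge, and group columns (indexed by $\V \setminus V$) the same way, then $B^V_E$ becomes block-diagonal. Each block $B_i$ corresponds to a non-singleton component $\T_i$ of $(\V, F)$ and has rows $\E(\T_i)$ and columns $\V(\T_i) \setminus V$; singleton components contribute nothing to $B^V_E$ and must be tracked separately.

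For (c) $\Rightarrow$ (a): assume $F$ is the edge set of a spanning $|V|$-forest $\{\T_0, \ldots, \T_{|V|-1}\}$ with each $\T_i$ containing a unique vertex $v_{(i)} \in V$. In the block-diagonal decomposition above, the block for $\T_i$ is precisely the incidence matrix of the tree $\T_i$ with the column for $v_{(i)}$ deleted. A routine induction on $|\V(\T_i)|$, using cofactor expansion along the row of an edge incident to a leaf other than $v_{(i)}$, shows that the reduced incidence matrix of any tree has determinant $\pm 1$. Multiplying across the blocks (singleton trees contribute empty $0 \times 0$ blocks with determinant $1$) yields $\det(B^V_E) = \pm 1$.

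For (b) $\Rightarrow$ (c): suppose $\det(B^V_E) \neq 0$. Two immediate consequences: (i) $F$ is acyclic, since a cycle in $F$ yields a signed row dependence in $B$ (each edge of the cycle contributes $\pm 1$, and at each cycle vertex the two incident edges contribute opposite signs) that restricts to a dependence among rows of $B^V_E$; and (ii) every $w \in \V \setminus V$ is incident to some edge of $F$, else its column vanishes. In particular, each singleton component of $(\V, F)$ is a vertex of $V$. Let $\T_1, \ldots, \T_{c'}$ be the non-singleton components of $(\V, F)$, with $v_i \defeq |\V(\T_i)|$ and $p_i \defeq |\V(\T_i) \cap V|$. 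Then $\sum_i (v_i - 1) = |F| = r$ and $\sum_i (v_i - p_i) = |\V \setminus V| = r$, which together give $\sum_i p_i = c'$. The rank of the block-diagonal matrix satisfies
\[
r \;=\; \sum_i \mathrm{rank}(B_i) \;\leq\; \sum_i \min(v_i - 1,\, v_i - p_i) \;=\; r - \sum_{p_i \geq 2}(p_i - 1),
\]
which forces $p_i \leq 1$ for all $i$. Combined with $\sum_i p_i = c'$ this gives $p_i = 1$ for every $i$. Adjoining the $|V| - c'$ singleton components (each a vertex of $V$) as singleton trees produces the required spanning $|V|$-forest, establishing (c).

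I expect the main obstacle to be the careful accounting of singleton components, since vertices of $V$ with no incident edge in $F$ contribute neither rows nor columns to $B^V_E$, yet must appear as singleton trees in the final spanning $|V|$-forest. Everything else reduces to the block-diagonal reduction, the cycle-gives-dependence observation, and the standard lemma that a reduced tree-incidence matrix has unit determinant.
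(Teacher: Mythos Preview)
Your proof is correct and follows essentially the same approach as the paper's: both use the block-diagonal decomposition of $B^V_E$ by connected components of $(\V,F)$, the standard induction that a reduced tree-incidence matrix has determinant $\pm 1$, and the observation that a cycle produces a linear dependence. The only cosmetic difference is in (b) $\Rightarrow$ (c), where the paper argues by contradiction that no component can avoid $V$ (else its full incidence matrix would appear as a block with dependent columns) and then counts components to get $k=|V|$, while you package the same information into the counting identity $\sum p_i=c'$ together with the rank inequality $r\leq r-\sum_{p_i\geq 2}(p_i-1)$ to force $p_i=1$ directly.
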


\begin{proof}
Clearly $(a) \Rightarrow (b)$. Next we will show that $(c) \Rightarrow (a)$. For this, let $\big\{\T_0, \ldots, \T_{\vert V \vert-1} \big\}$ be as in (c). We may reorder the vertices and edges so that first those in the tree $\T_0$ appear in the ordering, then those in $\T_1$ appear, and so forth. Let $B_i$ be the incidence matrix of $\T_i$, and let $D_i$ be the (square) matrix obtained from $B_i$ by deleting the column corresponding to the unique element of $V$ that $\T_i$ contains. Since the $\T_i$ are disjoint, it follows that $B_E^V$ is a block diagonal matrix of the form
					     \begin{equation}\label{eq:mstform}
					     B_E^V = \left(\begin{array}{cccc}
					     				D_0 &   &  & \\
					     				     & D_1 &  &\\\
					     				      &       & \ddots &   \\
					     				                &     &    & D_{\vert V \vert -1}
					     				                \end{array}\right).
					     				     \end{equation}
					     				   A simple induction on the size of each tree $\T_i$ shows that $\det(D_i) = \pm 1$. This proves $(a)$.

\begin{remark}\label{rem:1}
It is possible to determine the sign of $\det(D_i)$ from the following general observation: Let $B(T)$ be the incidence matrix for a tree $T$ with vertices labeled by $\left\{w_1, \ldots, w_\ell \right\}$. Given a vertex $w$, let $D_w$ be the matrix obtained from $B(T)$ by deleting the column corresponding to $w$. Then $\det(D_{w_i}) = (-1)^{i+j} \det(D_{w_j})$. 
\hfill $\Diamond$ 
\end{remark}

Now we will show $(b) \Rightarrow (c)$. For this, assume $\det(B_E^V) \neq 0$. We will first show that the edges of $\E \backslash E$ form a disjoint union of trees. If not, then there are edges in $\E \backslash E$ that form a cycle. After possibly reordering the vertices and edges and multiplying some rows by $-1$, we may assume the matrix $B_E^V$ is of the form
$$B_E^V = \left( \begin{array}{cc}
					C & 0\\
					* & *
					\end{array} \right)$$
					   where $C$ is the incidence matrix for the cycle. (The zero in the upper right reflects the fact that each edge in the cycle has both of its vertices within the cycle.) It is easy to check that the sum of the column vectors in $C$ is zero, so the matrix $C$ has determinant zero. Since $C$ is a square matrix, this implies that $B_E^V$ has determinant zero, which is a contradiction. 
					     
					     We may therefore assume that the edges of $\E \backslash E$ form a disjoint union of trees. There may be vertices of $\Gamma$ that do not appear in any of these trees. However, by including each such vertex as an additional tree (with one vertex and no edges), we obtain a spanning $k$-forest $\big\{\T_0, \ldots, \T_{k-1} \big\}$, for some $k \geq 1$, with the same underlying edge set as $\E \backslash E$. Moreover, we have
					     $$n+1 - k = \vert \E \backslash E \vert = \vert \V \backslash V \vert = n+1 - \vert V \vert.$$
					     It follows that $k = \vert \V \vert$. We will show that each $\T_i$ contains exactly one element of the vertex deletion set $V$; since $k = \vert \V \vert$, it will follow immediately from this that each $\T_i$ contains exactly one element of $V$, and hence (c) follows. 
					     
					     Suppose that there is a tree $\T_i$ that does not contain any vertex from $V$. Relabel the vertices and edges so those of $\T_i$ appear first in this new labeling, and let $k \geq 1$ denote the number of vertices in $\T_i$. First assume that $k > 1$ and let $B(\T_i)$ be the $(k-1) \times k$ incidence matrix of $\T_i$. Since $\T_i$ is disjoint from the other trees, it follows that the incidence matrix has the form
						\begin{equation}\label{eq:MST}
						B_E^V = \left( \begin{array}{cc}
					B(\T_i) & 0\\
					0 & *
					\end{array} \right).	
					\end{equation}
					The columns of $B(\T_i)$ are linearly dependent (e.g., their sum is zero), which contradicts $\det(B_E^V) \neq 0$. If $k = 1$, then $B_E^V$ has the form (\ref{eq:MST}), where now $B(\T_i)= 0$ is the $1 \times 1$ zero matrix. This again contradicts $\det(B_E^V) \neq 0$.  
\end{proof}

Now we complete the proofs of Theorem \ref{thm:kirchoff} and Remark \ref{rem:sign}. The matrix $L^V_W$ obtained from the Laplacian by row/column-deletion can be expressed in terms of the incidence matrix as
$$L^V_W= (B^{W})^\dagger B^{V},$$
where the dagger denotes transpose. By the Cauchy--Binet formula, the determinant is given by 
\begin{equation}\label{eq:CB}
\det(L_W^V)   = \sumd{E} \det(B_E^W )\det(B^V_E),
\end{equation}
where the sum is over all subsets $E \subseteq \E$ with complement of size $r$. By Lemma \ref{lem:kirchoff}, the $E$ summand $\det(B^W_E) \det(B^V_E)$ is non-zero exactly when the edges of $E$ form a spanning $\vert V \vert$-forest with the property that each tree contains exactly one vertex of $V$ and exactly one vertex of $W$. 

From Lemma \ref{lem:kirchoff}, we know that $\det(B^W_E) \det(B^V_E)$ is $1$ or $-1$. To finish the proof of Theorem \ref{thm:kirchoff}, we need to show that this sign is independent of $E$. To see this, fix $E$ and write
$$V = \left\{v_{i_1}, \ldots, v_{i_{\vert V \vert}} \right\}, \indent W = \left\{v_{j_1}, \ldots, v_{j_{\vert V \vert}} \right\}.$$
We have just seen that each tree determined by $E$ contains exactly one vertex from $V$ and one from $W$. It follows that $E$ determines a bijection 
$$t: \left\{i_1, \ldots, i_{\vert V \vert} \right\} \longrightarrow \left\{j_1, \ldots, j_{\vert V \vert} \right\}$$
between the indexing sets of $V$ and $W$ (note that this bijection is necessarily the identity on the intersection of these indexing sets). By Remark \ref{rem:1}, it follows that 
$$\det(B_E^W )\det(B^V_E) = (-1)^{i_1 + t(i_1)}(-1)^{i_2 + t(i_2)} \ldots (-1)^{i_{\vert V \vert} + t(i_{\vert V \vert})}.$$
Since $t$ is a bijection, the right-hand side is just
\begin{equation}\label{eq:sign}
(-1)^{i_1 + \ldots + i_{\vert V \vert} + j_1 + \ldots + j_{\vert V \vert}},
\end{equation}
which is independent of $t$, and hence of $E$. This finishes the proof of Theorem \ref{thm:kirchoff}. 

To prove the assertions of Remark \ref{rem:sign}, first assume that $W = V$. Then $\det(B_E^W) \det(B_E^V) = \det(B_E^V)^2$, and so (\ref{eq:CB}) is a sum of non-negative numbers. The identity (\ref{eq:VV}) follows. For (\ref{eq:ij}), we have $V = \left\{v_0, v_j\right\}$ and $W = \left\{v_0, v_i\right\}$, and so (\ref{eq:sign}) simplifies to $(-1)^{i+j}$. \qed

\section{The $G$-odometer}\label{sec:TheOdometer}

Let $G$ be a subgroup of $(\bb{R}, +)$, and we assume $\bb{Z} \subseteq G$. The standard inequality on $\bb{R}$ induces a partial ordering $\leq$ on $G^{\V'}$. Given $\sigma \in \bb{Z}^{\V'} \subseteq G^{\V'}$, we will be interested in solving the following inequality system for $u$:
\begin{eqnarray}
\sigma - L'  u & \leq & d-1 \label{eq:odo1}\\
u & \geq & 0\label{eq:odo2}
\end{eqnarray}
The next proposition provides existence and uniqueness for minimizers of the above system.

\begin{proposition}[Least Action Principle]\label{prop:existence}
Assume that $G$ is topologically closed as a subset of $\bb{R}$ and fix $\sigma \in \bb{Z}^{\V'}$. Then there is a unique $u_{\sigma}^G \in G^{\V'}$ satisfying (\ref{eq:odo1}) and (\ref{eq:odo2}) that is minimal in the sense that $u_{\sigma}^G \leq u$ for all $u \in G^{\V'}$ satisfying (\ref{eq:odo1}) and (\ref{eq:odo2}). 
\end{proposition}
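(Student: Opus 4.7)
The approach is to show the solution set
$$S_\sigma^G := \bigl\{ u \in G^{\V'} : u \text{ satisfies } (\ref{eq:odo1}) \text{ and } (\ref{eq:odo2}) \bigr\}$$
is non-empty, is closed under componentwise minimums, and is bounded below by $0$, and then to realize $u_\sigma^G$ as the componentwise infimum of $S_\sigma^G$.

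For non-emptiness, I will invoke the Bj\"orner--Lov\'asz--Shor stabilization result recalled in the introduction: the toppling-count odometer $u_\sigma^{\bb{Z}} \ge 0$ exists, and since a single toppling at $v$ subtracts $L'\mathbf{1}_v$ from the current configuration, the terminal value is $\sigma - L' u_\sigma^{\bb{Z}} = \tau_\sigma \le d - 1$. Since $\bb{Z} \subseteq G$, this places $u_\sigma^{\bb{Z}} \in S_\sigma^G$. For min-closure, given $u_1, u_2 \in S_\sigma^G$, put $u := \min(u_1, u_2)$. Clearly $u \ge 0$, and for any $v \in \V'$, assume without loss of generality that $u(v) = u_1(v)$; because the off-diagonal entries of $L'$ are $-\#(v,w) \le 0$, at every other vertex $u(w) \le u_1(w)$ gets multiplied by a non-positive number, so
$$(L'u)(v) = d(v) u_1(v) - \sum_{w \in \V' \setminus \{v\}} \#(v,w) \min(u_1(w), u_2(w)) \ge (L' u_1)(v) \ge \sigma(v) - d(v) + 1,$$
verifying that $u \in S_\sigma^G$.

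To extract the minimizer, for each $v \in \V'$ I set $\alpha_v := \inf\{u(v) : u \in S_\sigma^G\}$, which lies in $[0, \infty)$ thanks to non-emptiness and constraint (\ref{eq:odo2}). For each $v$ I pick a sequence $u_{n,v} \in S_\sigma^G$ with $u_{n,v}(v) \to \alpha_v$; since $\V'$ is finite, $u_n := \min_{w \in \V'} u_{n, w}$ lies in $S_\sigma^G$ by iterated min-closure, and the sandwich $\alpha_v \le u_n(v) \le u_{n,v}(v)$ forces $u_n \to u_\sigma^G$ componentwise, where $u_\sigma^G(v) := \alpha_v$. Because $G$ is topologically closed in $\bb{R}$, each componentwise limit $\alpha_v$ lies in $G$, so $u_\sigma^G \in G^{\V'}$; because (\ref{eq:odo1})--(\ref{eq:odo2}) are preserved under componentwise limits of real vectors, $u_\sigma^G \in S_\sigma^G$. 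Minimality $u_\sigma^G \le u$ for every $u \in S_\sigma^G$ is baked into the definition of $\alpha_v$, and uniqueness is then automatic from antisymmetry of $\le$. The main obstacle is the simultaneous attainment of all infima $\alpha_v$ by a single element of $S_\sigma^G$; this is precisely where the min-closure property and the topological closedness of $G$ interlock, and the argument would collapse for non-closed subgroups such as $G = \bb{Q}$.
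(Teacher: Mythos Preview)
Your proof is correct and follows essentially the same route as the paper's: both establish non-emptiness via the integral odometer $u_\sigma^{\bb{Z}}$, prove closure of the feasible set under componentwise minima using the sign pattern of $L'$, define the candidate minimizer as the vector of componentwise infima, and then realize it as a limit of minima of approximating sequences, invoking closedness of $G$ to land in $G^{\V'}$. The only cosmetic difference is organizational---you frame things in terms of the solution set $S_\sigma^G$ and state min-closure up front, whereas the paper interleaves the min-closure computation with the limiting argument---but the logical content is the same.
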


We prove this in Section \ref{sec:proofofpropexistence} after we give several definitions and examples. The sandpile $u_{\sigma}^{G}$ from the proposition will be called the \emph{$G$-odometer of $\sigma$}. Since $\bb{Z}^{\V'} \subseteq G^{\V'}$, we automatically have
\begin{equation}\label{eq:odododo}
u_\sigma^{G} \leq u_{\sigma}^{\bb{Z}}.
\end{equation}

Consider the case $G = \bb{Z}$. The $\bb{Z}$-odometer $u_{\sigma}^{\bb{Z}}$ defined here is the same as the one discussed in the introduction; see \cite{LP,FLP,CP}. Moreover, the stabilization $\tau_\sigma$ of $\sigma$ from the introduction is given by $\tau_\sigma = \sigma - L' u_{\sigma}^{\bb{Z}}$.

We will say that $\sigma$ is \emph{$G$-immutable} if $u_\sigma^G = u_\sigma^{\bb{Z}}$. When $G = \bb{R}$ and there is little room for confusion, we will drop the $\bb{R}$ and simply say ``immutable'' in place of ``$\bb{R}$-immutable''. We will also say that a sandpile is \emph{mutable} if it is not immutable. 

\begin{remark}\label{rem:conventions}
In much of the sandpile literature (e.g., \cite{LP2,LMPU,LP3}), the term ``divisible sandpile'' refers to real-valued maps $\sigma: \V' \rightarrow \bb{R}$. These objects are generally studied in a framework that is effectively separate from standard (integer-valued) sandpiles. This allows the freedom to consider the system
\begin{equation}\label{eq:othersodo}
\begin{array}{rcl}
\sigma - L'  u & \leq & 1
\end{array}
\end{equation}
for $u: \V' \rightarrow \left[0, \infty \right)$, which is a rescaled version of (\ref{eq:odo1}) that is often convenient. However, our perspective in this paper is comparative in nature, in the sense that we want to simultaneously consider real-valued and integer-valued labelings. From our perspective, the rescaling leading to (\ref{eq:othersodo}) is artificial, and so we consider (\ref{eq:odo1}) even for real-valued maps. 
\hfill $\Diamond$ \end{remark}

The next section records several useful properties of immutability, and gives a few examples. In Section \ref{sec:proofofpropexistence}, we give a proof of the above Least Action Principle.

\subsection{Basic properties about $G$-immutable sandpiles}\label{sec:BasicProperties}

The following criterion follows directly from the definitions, but is an observation we will use repeatedly.

\begin{lemma}\label{lem:1}
A sandpile $\sigma$ is $G$-immutable if and only if the $G$-odometer $u_{\sigma}^{G}$ is an element of the subgroup $\bb{Z}^{\V'} \subseteq G^{\V'}$.
\end{lemma}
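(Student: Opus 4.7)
The plan is to unpack both directions directly from Proposition \ref{prop:existence} (Least Action Principle) and the comparison inequality (\ref{eq:odododo}). Since the lemma is a near-tautology once the Least Action Principle is in hand, the proof should amount to one short observation in each direction rather than any deep argument.

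For the forward implication, assume $\sigma$ is $G$-immutable, so $u_\sigma^G = u_\sigma^{\bb{Z}}$. By the very definition of the $\bb{Z}$-odometer in Proposition \ref{prop:existence} (with $G = \bb{Z}$), we have $u_\sigma^{\bb{Z}} \in \bb{Z}^{\V'}$, hence $u_\sigma^G \in \bb{Z}^{\V'}$.

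For the reverse implication, suppose $u_\sigma^G \in \bb{Z}^{\V'}$. Since $u_\sigma^G$ satisfies (\ref{eq:odo1}) and (\ref{eq:odo2}) by Proposition \ref{prop:existence}, it is an element of $\bb{Z}^{\V'}$ satisfying the $\bb{Z}$-version of the inequality system. By the minimality clause of Proposition \ref{prop:existence} applied with $G = \bb{Z}$, this forces $u_\sigma^{\bb{Z}} \leq u_\sigma^G$. Combining this with (\ref{eq:odododo}) yields $u_\sigma^G = u_\sigma^{\bb{Z}}$, so $\sigma$ is $G$-immutable.

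There is no real obstacle to speak of; the only thing to be slightly careful about is to invoke Proposition \ref{prop:existence} twice (once for each group) and to note that $\bb{Z}^{\V'} \subseteq G^{\V'}$ lets us compare the two minimizers inside the same ambient ordered set. So the proof should be just a few lines.
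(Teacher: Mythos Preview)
Your proof is correct and is exactly the unpacking the paper has in mind: the paper does not give a proof, merely remarking that the lemma ``follows directly from the definitions,'' and your two short observations via Proposition~\ref{prop:existence} and (\ref{eq:odododo}) are precisely that direct verification.
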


\begin{example} \label{ex:stable}
Here we show that every stable sandpile is $G$-immutable. If $\sigma$ is stable then $\sigma \leq d -1$, so $u = 0$ satisfies (\ref{eq:odo1}) and (\ref{eq:odo2}). This is clearly the minimal such solution over $G$, so $u_{\sigma}^{G} = 0$. The zero sandpile is integral, so $u_{\sigma}^{\bb{Z}} = u_{\sigma}^{G} $ by the previous lemma. 
\hfill $\Diamond$ \end{example}

Since $G \subseteq \bb{R}$, it follows that $u_\sigma^G \geq u_\sigma^{\bb{R}}$. This implies that $\sigma$ is $G$-immutable whenever $\sigma$ is $\bb{R}$-immutable. In fact, a partial converse of this holds: Assume that $G$ contains all components of $(L')^{-1}$ (e.g., $G = \det(L')^{-1} \bb{Z}$). Once we have Theorem \ref{thm:1} in hand, it will then follow that $u_\sigma^G = u_\sigma^{\bb{R}} $ for all uniformly large $\sigma$. Thus we have the following.

\begin{corollary}[Corollary to Theorem \ref{thm:1}]\label{cor:CorToThm}
Assume $L'$ is invertible over $G$ and $\sigma$ is uniformly large. Then $\sigma$ is $G$-immutable if and only if $\sigma$ is $\bb{R}$-immutable. 
\end{corollary}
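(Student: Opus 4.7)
The plan is to exploit a sandwich of the three odometers. Unconditionally one has
$$u_\sigma^{\bb{R}} \leq u_\sigma^G \leq u_\sigma^{\bb{Z}},$$
where the right-hand inequality is (\ref{eq:odododo}) and the left-hand one follows because any $u \in G^{\V'} \subseteq \bb{R}^{\V'}$ satisfying (\ref{eq:odo1})--(\ref{eq:odo2}) is in particular a feasible vector for the $\bb{R}$-problem, so the $\bb{R}$-minimizer is bounded above by $u_\sigma^G$. With this sandwich in hand, the forward implication ($\bb{R}$-immutable $\Rightarrow$ $G$-immutable) is immediate and requires neither hypothesis: if $u_\sigma^{\bb{R}} = u_\sigma^{\bb{Z}}$, then all three terms collapse to $u_\sigma^{\bb{Z}}$, and hence $\sigma$ is $G$-immutable by Lemma \ref{lem:1}.

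For the reverse direction, the key step is to show that under the two hypotheses one already has $u_\sigma^{\bb{R}} \in G^{\V'}$. Since $\sigma$ is uniformly large, Theorem \ref{thm:1} gives the explicit formula $u_\sigma^{\bb{R}} = (L')^{-1}(\sigma - d + 1)$. The input $\sigma - d + 1$ lies in $\bb{Z}^{\V'} \subseteq G^{\V'}$, and because $L'$ is invertible over $G$, applying $(L')^{-1}$ keeps us in $G^{\V'}$. I then need to verify that this vector is a feasible $G$-solution of the system (\ref{eq:odo1})--(\ref{eq:odo2}): inequality (\ref{eq:odo1}) becomes the equality $\sigma - L' u_\sigma^{\bb{R}} = d-1$ by direct substitution, and inequality (\ref{eq:odo2}) holds because $(L')^{-1}$ has non-negative entries (Remark \ref{Rem:L'inverse}) and $\sigma - d + 1 \geq 0$ by the uniformly large hypothesis.

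With $u_\sigma^{\bb{R}}$ now recognized as a feasible element of $G^{\V'}$, the minimality clause of the Least Action Principle (Proposition \ref{prop:existence}) forces $u_\sigma^G \leq u_\sigma^{\bb{R}}$. Combining with the sandwich gives $u_\sigma^G = u_\sigma^{\bb{R}}$. Therefore $u_\sigma^G = u_\sigma^{\bb{Z}}$ holds if and only if $u_\sigma^{\bb{R}} = u_\sigma^{\bb{Z}}$, which via Lemma \ref{lem:1} is exactly the desired equivalence between $G$-immutability and $\bb{R}$-immutability.

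I do not anticipate any real technical obstacle here: the argument is a clean packaging of Theorem \ref{thm:1}, the non-negativity of $(L')^{-1}$ from Remark \ref{Rem:L'inverse}, and the monotonicity of the Least Action Principle under nested ambient groups. The only delicate point is the reverse direction, where the uniformly large hypothesis is essential---without Theorem \ref{thm:1}'s closed-form expression for $u_\sigma^{\bb{R}}$, one cannot conclude that the real odometer already lies in $G^{\V'}$, and the whole chain of reasoning breaks.
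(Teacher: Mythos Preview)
Your argument is correct and is essentially the same as the paper's: the paper records the sandwich $u_\sigma^{\bb{R}} \leq u_\sigma^G \leq u_\sigma^{\bb{Z}}$ just before stating the corollary, and then observes that Theorem \ref{thm:1} forces $u_\sigma^{\bb{R}} = (L')^{-1}(\sigma-d+1) \in G^{\V'}$ under the hypotheses, whence $u_\sigma^G = u_\sigma^{\bb{R}}$ and the equivalence follows. Your write-up simply fleshes out the feasibility check and the appeal to minimality that the paper leaves implicit; the only (harmless) redundancy is invoking Lemma \ref{lem:1}, since $u_\sigma^G = u_\sigma^{\bb{Z}}$ is already the definition of $G$-immutability.
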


The usefulness of the above corollary is that, to determine immutability for uniformly large sandpiles, one can work in, e.g., the cyclic group $\det(L')^{-1} \bb{Z}$ as opposed to $\bb{R}$. In the absence of a result such as Theorem \ref{thm:1}, this is not a priori clear from the definitions.

\begin{example}\label{ex:multik2}
Let $\Gamma$ be the connected multigraph with two vertices, $k \geq 1$ edges, and no self-loops; see Figure \ref{figure2}. As a warm-up for our computations below, we will classify the $(\bb{R}$-)immutable sandpiles on $\Gamma$. Fix a vertex to be the sink, and let $v$ be the other vertex. We claim that a sandpile $\sigma$ is immutable if and only if $\sigma$ is unstable or $k$ divides $\sigma(v) +1$. (Note that $k = d(v)$ is the degree of $v$.)

By Example \ref{ex:stable}, it suffices to assume $\sigma$ is unstable. Since there is only one non-sink vertex, the $\bb{R}$-odometer is the smallest real number $u(v) \geq 0$ so that
$$u(v) \geq (\sigma(v) + 1 - k)/k.$$
Clearly the minimum is $u_{\sigma}^{\bb{R}}(v) = (\sigma(v) + 1 - k) / k$, which is positive since $\sigma$ is unstable. The claim now follows from this expression and Lemma \ref{lem:1}. 

If $1/k \in G$, this shows that $G$-immutability is equivalent to $\bb{R}$-immutability for any sandpile (Corollary \ref{cor:CorToThm} only applies to uniformly large sandpiles).
\hfill $\Diamond$ \end{example}

\vspace{.5cm}
\begin{figure}[h]
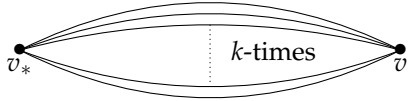

\centerline{\xy /r2pc/:,
(0,0)*={\xy
		(-3,0)="A0";
		(3,0)="A1";
		(0,.4)="T";
		(0,-.5)="B";
		"A0"*{\bullet};
		"A0"*+!U{\hbox{${{v_*}}$}};
		"A1"*{\bullet};
		"A1"*+!U{\hbox{${{v}}$}}; 
"T";"B" **\dir{.};
"A0";"A1" **\crv{(0,1.5)};
"A0";"A1" **\crv{(0,1.15)};
"A0";"A1" **\crv{(0,.8)};
"A0";"A1" **\crv{(0,-1.15)};
"A0";"A1" **\crv{(0,-1.5)};
(1,0)*{\textrm{$k$-times}};
\endxy};
\endxy}
\vspace{.5cm}
\caption{The multigraph of Example \ref{ex:multik2} with 2 vertices, and $k$ edges.\hspace{1.2cm} $\Diamond$}
\label{figure2}
\end{figure}

\subsection{Proof of Proposition \ref{prop:existence}}\label{sec:proofofpropexistence}

The proposition in the case $G = \bb{Z}$ is standard in the sandpile community; see \cite[Ch. 6]{CP} for a proof in the case of finite multigraphs $\Gamma$, as considered here. In particular, there exists a unique integral minimizer $u_\sigma^{\bb{Z}} \in \bb{Z}^{\V'}$ of (\ref{eq:odo1}) and (\ref{eq:odo2}).

For general $G$, define $u_{\sigma}^{G}: \V' \rightarrow G$ by
$$u_{\sigma}^{G} (v) \defeq \inf_u u(v)$$
where the infimum is over all $u \in G^{\V'}$ satisfying (\ref{eq:odo1}) and (\ref{eq:odo2}). Note that for each $v \in \V'$ this infimum is a well-defined element of $G$ because (i) $G$ is topologically closed in $\bb{R}$ and (ii) there does indeed exist an element of $G^{\V'}$ satisfying these inequalities---the $\bb{Z}$-odometer $u_\sigma^{\bb{Z}}$. We clearly also have that $u_{\sigma}^{G}$ satisfies (\ref{eq:odo2}), since inequalities are preserved by infima. We need to show that $u_{\sigma}^{G}$ satisfies (\ref{eq:odo1}); once this has been done, it will follow immediately from the definition that $u_{\sigma}^{G}$ is the \emph{unique} minimal solution. 

We begin with a preliminary computation: If $u_1, u_2$ satisfy (\ref{eq:odo1}) and (\ref{eq:odo2}), we will show that $u \defeq \min(u_1, u_2)$ does as well. Clearly it satisfies (\ref{eq:odo2}), so we turn to (\ref{eq:odo1}). To see this, fix a vertex $v \in \V'$. Without loss of generality, we may assume $u(v) = u_1(v) \leq u_2(v)$. Using $\sim$ to denote adjacency, we have
$$\begin{array}{rcl}
(L'u)(v) & = & d(v) u(v) - \sumd{w \sim v} \#(v, w) u(w) \\
& = & d(v) u_1(v) - \sumd{w \sim v} \#(v, w) u(w)\\
& \geq & d(v) u_1(v) - \sumd{w \sim v} \#(v, w) u_1(w)\\
& = & (L' u_1)(v)\\
& \geq & \sigma(v) - d(v) + 1.
\end{array}$$
Hence $u$ satisfies (\ref{eq:odo1}). 

With this in hand, for each vertex $v \in \V'$, fix a sequence $u_n^v \in G^{\V'}$ satisfying (\ref{eq:odo1}) and (\ref{eq:odo2}) with $u_n^v(v) \rightarrow u_{\sigma}^{G}(v)$. Define a new sequence $u_n'$ by $u_n' \defeq \min_v(u_n^v)$ where the minimum is taken over all $v \in \V'$. Clearly $u_n'$ satisfies (\ref{eq:odo2}). We also have 
$$\vert u_n'(v) - u_{\sigma}^{G}(v) \vert = u_n'(v) - u_{\sigma}^{G}(v) \leq u_n^v(v) - u_{\sigma}^{G}(v) = \vert u_n^v(v) - u_{\sigma}^{G}(v)  \vert \rightarrow 0$$
for all $v \in \V'$. This shows that $u_n'$ converges to $u_{\sigma}^{G}$ as elements of $G^{\V'}$ and so $L' u_n' \rightarrow L' u_{\sigma}^{G}$. The preliminary computation of the previous paragraph shows that $u_n'$ satisfies (\ref{eq:odo2}), so for each $v \in \V'$ we have
$$\sigma(v)  - d(v) + 1 \leq (L' u_n')(v).$$
Taking the limit in $n$ implies $\sigma(v) - d(v) + 1 \leq (L' u_{\sigma}^{G})(v)$, which is (\ref{eq:odo1}). 
\qed

\section{Proofs of the main results}

\subsection{Proof and discussion of Theorem \ref{thm:1}}\label{sec:ProofOfThm1.1}

We begin by proving Theorem \ref{thm:1} from the introduction. We then consider three examples that show the uniformly large hypothesis (ULH) cannot be removed entirely. 

\begin{proof}[Proof of Theorem \ref{thm:1}]
Fix a sandpile $\sigma$. Since $L'$ is invertible over $\bb{R}$, the equation 
\begin{equation} \label{eq 3}
\sigma - L'\tilde{u} = d-1
\end{equation}
has a unique solution $\tilde{u} \in \bb{R}^{\V'}$. When $\sigma$ is uniformly large, we have $\sigma - d+1 \geq 0$. By Remark \ref{Rem:L'inverse}, the entries of $L'$ are all non-negative, so $\tilde{u} = (L')^{-1}(\sigma - d+1) \geq 0$. This and (\ref{eq 3}) imply that $\tilde{u} \in \bb{R}^{\V'}$ satisfies (\ref{eq:odo1},\ref{eq:odo2}). In particular, since $u_\sigma^{\bb{R}}$ is minimal among all such solutions, we have $q \defeq \tilde{u} - u_\sigma^{\bb{R}} \geq 0$. Note that the definition of $\tilde{u}$ combines with (\ref{eq:odo1}) for $u_\sigma^{\bb{R}}$ to give
$$L'q = \sigma - d + 1 - L' u_\sigma^{\bb{R}} \leq 0.$$
Thus $\langle L'q, q \rangle \leq 0$, since $q \geq 0$. On the other hand, $L'$ is positive definite, so $0 \leq \langle L' q, q \rangle \leq 0$ and hence $q = 0$. This gives $u_\sigma^{\bb{R}} = (L')^{-1}(\sigma - d+1)$. The remaining assertion of the theorem follows from Lemma \ref{lem:1}.  
\end{proof}

The following examples show that none of the claims of the theorem hold with the ULH dropped entirely. We begin with a very simple example.

\begin{example}\label{ex:trivial}
Consider the sandpile $\sigma = 0$. This is stable, so $u_\sigma^{\bb{Z}} = u_\sigma^{\bb{R}} = 0$, which implies it is immutable. On the other hand, $(L')^{-1}(\sigma - d+1) = (L')^{-1}(-d+1)$. If $\Gamma$ is any graph with $d \neq 1$, then $(L')^{-1}(-d+1) \neq u_\sigma^{\bb{R}}$; this means the first conclusion of Theorem \ref{thm:1} would fail with the ULH dropped. If $\Gamma$ is such that $(L')^{-1}(-d+1)$ is not integral (e.g., the multigraph of Example \ref{ex:multik2} for $k \geq 2$), then this also means the second conclusion of the theorem would fail without the ULH. 
\hfill $\Diamond$ \end{example}

The above example is not entirely satisfying, since it still leaves room for the ULH to be replaced by something significantly simpler such as ``$\sigma$ is not 0'', or even ``$\sigma$ is not stable''. The following example shows that neither of the conclusions of Theorem \ref{thm:1} would hold if the ULH were replaced by even the stronger of these two assumptions: that $\sigma$ is not stable.

\begin{example}\label{ex:complete1}
Let $\Gamma  = K_3$ be the complete graph on 3 vertices. Order the non-sink vertices to identify $\bb{R}^{\V'} \cong \bb{R}^2$, and consider the sandpile $\sigma = (2, 0)^\dagger$; the dagger denotes the transpose. Note that this is not uniformly large, and it is not stable. One can compute directly that $(L')^{-1}(\sigma - d+1)=(1/3,-1/3)^\dagger$. In particular, $(L')^{-1}(\sigma - d+1)$ is not equal to $u_\sigma^{\bb{R}}$, since the $\bb{R}$-odometer is required to be non-negative. 

Next, we show that $\sigma$ is mutable. Indeed, we will show
$$u_\sigma^{\bb{Z}} = \left(\begin{array}{c}
													1\\
													0
													\end{array}\right) , \indent u_\sigma^{\bb{R}} = \left(\begin{array}{c}
													1/2\\
													0
													\end{array}\right).$$	
For this, note that the system (\ref{eq:odo1},\ref{eq:odo2}) is equivalent to 
\begin{equation}\label{eq:systemisdown}
1 \leq 2x -y, \indent 1 \geq x - 2y, \indent x \geq 0 , \indent y \geq 0.
\end{equation}
where we have written $u = (x, y)^\dagger$ in coordinates. From here, it is easy to see that the claimed values of $u_\sigma^{\bb{Z}}$ and $u_\sigma^{\bb{R}}$ satisfy (\ref{eq:systemisdown}) and hence (\ref{eq:odo1}, \ref{eq:odo2}). It therefore suffices to show that these are minimal in $\bb{Z}$ and $\bb{R}$, respectively. For $u_\sigma^{\bb{Z}}$, minimality is easy since the only nonnegative vector that is smaller is $(0, 0)^\dagger$, and this does not satisfy (\ref{eq:systemisdown}). That $u_\sigma^{\bb{R}}$ is minimal follows from the first and last inequalities in (\ref{eq:systemisdown}), which imply $x \geq 1/2$. 
\hfill $\Diamond$ \end{example}

The above example still leaves open the possibility that the integrality of $(L')^{-1}(\sigma - d + 1)$ could detect immutability in the absence of the ULH. The following example shows this is not the case. 

\begin{example}\label{ex:complete2}
Let $\Gamma  = K_4$ be the complete graph on 4 vertices, and identify $\bb{R}^{\V'} \cong \bb{R}^3$ by ordering the vertices. Consider here the sandpile $\sigma = (4,0, 0)^\dagger$. As in the previous example, one can check directly that
$$u_\sigma^{\bb{Z}} = \left(\begin{array}{c}
													1\\
													0\\
													0
													\end{array}\right) , \indent u_\sigma^{\bb{R}} = \left(\begin{array}{c}
													1/2\\
													0\\
													0
													\end{array}\right),$$
													so $\sigma$ is mutable. On the other hand,
													$$(L')^{-1}(\sigma - d+1) = 	\left(\begin{array}{c}
													0\\
													-1\\
													-1
													\end{array}\right)$$
													which is integral. 
\hfill $\Diamond$ \end{example}

For similar phenomena, but on a path, see Example \ref{ex:path4}.

\subsection{Proof and discussion of Corollary \ref{cor:2}}\label{sec:Proof3}

Throughout this section, we assume that $\Gamma$ is the cone of a regular graph and the sink is the cone point. We begin with two observations that are special to this setting, and then use these to prove Corollary \ref{cor:2}. At the end, we discuss the extent to which the various hypotheses/conclusions of the corollary can be weakened/strengthened.

\begin{lemma}\label{lem:2}
View the degree map $d$ as a sandpile on $\Gamma$. Then
$$d-1 = L'(d-1).$$
\end{lemma}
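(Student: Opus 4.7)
The plan is to exploit the cone structure to reduce $d-1$ to a constant function on $\V'$, at which point the identity $L'(d-1) = d-1$ becomes a direct consequence of the row-sum property of the full Laplacian.

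First, I would unpack what the cone construction forces on degrees. If $\Gamma$ is the cone of a regular graph $\Gamma_0$, say with common degree $k$, then every non-sink vertex $v \in \V'$ (which is a vertex of $\Gamma_0$) has exactly $k$ neighbors within $\Gamma_0$, plus one edge to the cone point $v_* = \textnormal{sink}$. Thus $d(v) = k+1$ for every $v \in \V'$, and so the sandpile $d-1 \in \bb{Z}^{\V'}$ equals the constant function $k \cdot \mathbf{1}$, where $\mathbf{1}$ is the all-ones vector on $\V'$. By linearity, the lemma reduces to showing $L'\mathbf{1} = \mathbf{1}$.

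For this, I would use the elementary observation that each row of the full Laplacian $L$ sums to zero, since for any $v \in \V$, $L_{vv} = d(v) = \sum_{w \neq v} \#(v,w) = -\sum_{w \neq v} L_{vw}$. Applied to a row indexed by $v \in \V'$, and using the convention that $L'$ is obtained from $L$ by deleting the $v_*$-row and $v_*$-column (so $L'_{vw} = L_{vw}$ for $v, w \in \V'$), this gives
$$(L'\mathbf{1})(v) = \sum_{w \in \V'} L_{vw} = -L_{v,v_*} = \#(v,v_*).$$
Because $\Gamma$ is the cone over $\Gamma_0$ and has no self-loops, each $v \in \V'$ is joined to $v_*$ by exactly one edge, so $\#(v,v_*) = 1$. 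Therefore $(L'\mathbf{1})(v) = 1$ for every $v \in \V'$, which combined with the first paragraph yields $L'(d-1) = k \cdot L'\mathbf{1} = k\cdot \mathbf{1} = d-1$.

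This is a short and direct computation, so there is no substantial obstacle; the only point requiring care is keeping straight the convention for $L'$ and the fact that in the cone construction the multiplicity $\#(v,v_*)$ is always $1$, which is where the regularity of $\Gamma_0$ is not used but the cone structure is essential.
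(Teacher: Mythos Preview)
Your proof is correct and follows essentially the same approach as the paper's: both observe that the cone structure gives $L'c = c$ for constant $c$ (you spell out this computation via the row-sum property and $\#(v,v_*)=1$, whereas the paper states it directly), and both use regularity of $\Gamma_0$ to conclude that $d-1$ is constant on $\V'$.
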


\begin{proof}
The assumption that $\Gamma$ is a cone with sink given by the cone point implies that $L' c = c$ for any constant function $c \in \bb{R}^{\V'}$. Since $\Gamma$ is the cone of a \emph{regular} graph, it follows that $d$ is constant on the non-sink vertices. Thus $d -1$ is a constant sandpile, so the lemma follows. 
\end{proof}

\begin{lemma}\label{lem:3}
Fix a sandpile $\sigma$ and write $\sigma = L' a$ for some $a \in \bb{R}^{\V'}$ satisfying $a \geq d-1$. Then $u_{\sigma}^{\bb{R}} = a - d + 1$.
\end{lemma}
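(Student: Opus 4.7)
The plan is to show directly that $\tilde{u} \defeq a - d + 1$ satisfies the system (\ref{eq:odo1},\ref{eq:odo2}) and is the minimal such solution, essentially mirroring the argument used for Theorem \ref{thm:1} but with Lemma \ref{lem:2} replacing the uniformly large hypothesis.

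First I would check feasibility. The assumption $a \geq d - 1$ immediately gives $\tilde{u} \geq 0$, which is (\ref{eq:odo2}). For (\ref{eq:odo1}), I would compute
$$L' \tilde{u} = L'(a - d + 1) = L'a - L'(d-1) = \sigma - (d-1),$$
where the last equality uses $\sigma = L'a$ together with Lemma \ref{lem:2}. This rearranges to $\sigma - L' \tilde{u} = d - 1$, which certainly satisfies (\ref{eq:odo1}) with equality.

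Next I would establish minimality by rerunning the positivity argument from Theorem \ref{thm:1}. Since $u_\sigma^{\bb{R}}$ is the minimal solution of (\ref{eq:odo1},\ref{eq:odo2}), we have $q \defeq \tilde{u} - u_\sigma^{\bb{R}} \geq 0$. Using the displayed identity for $L'\tilde u$ together with (\ref{eq:odo1}) for $u_\sigma^{\bb{R}}$ gives
$$L' q = (\sigma - d + 1) - L' u_\sigma^{\bb{R}} \leq 0.$$
Since $q \geq 0$ and $L' q \leq 0$ componentwise, we get $\langle L' q, q \rangle \leq 0$. But $L'$ is positive definite (as the reduced Laplacian is invertible over $\bb{R}$ with positive spectrum), so $\langle L' q, q \rangle \geq 0$, forcing $q = 0$. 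Thus $u_\sigma^{\bb{R}} = \tilde{u} = a - d + 1$.

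I do not anticipate a serious obstacle here: the main ingredient, Lemma \ref{lem:2}, is what allows us to bypass the uniformly large hypothesis of Theorem \ref{thm:1}, and the rest is a near-verbatim repetition of the positivity/minimality argument. The one point to be careful about is ensuring the hypothesis $a \geq d-1$ is used in the correct place (to secure non-negativity of $\tilde u$), since without it $\tilde u$ would not be a legitimate candidate odometer.
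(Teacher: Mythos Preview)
Your argument is correct. You verify feasibility of $\tilde u = a - (d-1)$ exactly as the paper does, and your minimality step via the positive-definiteness trick $\langle L'q,q\rangle \leq 0$ is valid: since $\tilde u$ is feasible, the Least Action Principle gives $q \geq 0$, and your computation of $L'q \leq 0$ is correct.

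The paper takes a different route for the reverse inequality. Rather than rerunning the positive-definiteness argument from the proof of Theorem~\ref{thm:1}, it invokes Theorem~\ref{thm:1} itself as a black box on the shifted configuration $\sigma + (d-1)$, which is uniformly large because $\sigma \geq 0$. This yields $u^{\bb{R}}_{\sigma+(d-1)} = a$, and then the paper observes that $u_\sigma^{\bb{R}} + (d-1)$ is feasible for $\sigma + (d-1)$, so minimality forces $a \leq u_\sigma^{\bb{R}} + (d-1)$. Your approach is more self-contained and actually does not use the hypothesis $\sigma \geq 0$ at all (only $a \geq d-1$ and Lemma~\ref{lem:2}), so it is marginally stronger. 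The paper's approach, on the other hand, exhibits the pleasant shift identity $u^{\bb{R}}_{\sigma+(d-1)} = u^{\bb{R}}_\sigma + (d-1)$ as a byproduct.
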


\begin{proof}
The assumptions on $a$ combine with Lemma \ref{lem:2} to give
$$\sigma - L' (a - (d-1)) =  d-1, \indent a - (d-1) \geq 0,$$
and so $u =a - (d-1)$ satisfies (\ref{eq:odo1}, \ref{eq:odo2}). The minimality of $u_\sigma^{\bb{R}}$ then gives $u_\sigma^{\bb{R}} \leq a - (d-1)$. For the reverse inequality, notice that $\sigma + (d-1)$ is uniformly large since $\sigma$ is a sandpile. Theorem \ref{thm:1} implies its $\bb{R}$-odometer is given by
$$u^{\bb{R}}_{\sigma +(d-1)} = (L')^{-1}(\sigma + (d-1) - (d-1)) = a .$$
By Lemma \ref{lem:2} again, we have
$$\sigma + (d-1) - L'(u_\sigma^{\bb{R}} + (d-1)) = \sigma - L' u_\sigma^{\bb{R}} \leq d- 1,$$
Thus $u = u_\sigma^{\bb{R}} + (d-1)$ satisfies (\ref{eq:odo1}, \ref{eq:odo2}) relative to the sandpile $\sigma + (d-1)$. By minimality of the $\bb{R}$-odometer for $\sigma + (d-1)$, we therefore have
$$a = u^{\bb{R}}_{\sigma +(d-1)} \leq u_\sigma^{\bb{R}} + (d-1),$$
which establishes the reverse inequality we were after.
\end{proof}

\begin{proof}[Proof of Corollary \ref{cor:2}] 
First assume $\sigma$ is a uniformly large sandpile. By Theorem \ref{thm:1}, we have that $\sigma$ is immutable if and only if $u_\sigma^{\bb{R}} = (L')^{-1}(\sigma - d+1)$ is integral. Lemma \ref{lem:2} implies $(L')^{-1}(\sigma - d+1) = (L')^{-1}(\sigma) - (d-1)$, so $\sigma$ is immutable if and only if $a\defeq (L')^{-1}\sigma$ is integral. This also shows that $a = u_\sigma^{\bb{R}} + (d-1) \geq d-1$, so $a$ is uniformly large. In particular, this proves (a).

Now we prove (b), so we assume $\sigma = L' a \geq 0$ for some uniformly large sandpile $a$. By Lemma \ref{lem:3}, we have $u_\sigma^{\bb{R}} = a + d-1$. Since $a$ is integral, the immutability of $\sigma$ is immediate from Lemma \ref{lem:1}. 
\end{proof}

We end this section with a discussion of the various hypotheses and conclusions in the statement of Corollary \ref{cor:2}. Our first example shows that we cannot necessarily conclude in Corollary \ref{cor:2} (b) that $\sigma$ is uniformly large. Said differently, this corollary produces a class of immutable sandpiles, and the following example shows that this class contains sandpiles that are neither stable, nor uniformly large. 

\begin{example}\label{ex:expanded}
Let $\Gamma = K_3$ be as in Example \ref{ex:complete1}, and consider the uniformly large sandpile $a = (1, 2)^\dagger$. Then $\sigma = L' a = (0, 3)^\dagger$ is an unstable sandpile that is not uniformly large. By Corollary \ref{cor:2}, $\sigma$ is immutable. 
\hfill $\Diamond$ \end{example}

Our remaining examples all address the necessity of the hypotheses of Corollary \ref{cor:2}. We begin with the hypothesis in (a) that $\sigma$ is uniformly large. We give two examples showing that this hypothesis cannot be removed entirely. The first of these examples is a refinement of Example \ref{ex:trivial}.

\begin{example}\label{ex:ex}
Let $\Gamma$ be a graph with non-trivial critical group $K(\Gamma)$. Then there are stable sandpiles not in the image of $L': \bb{Z}^{\V'} \rightarrow \bb{Z}^{\V'}$; by Corollary \ref{cor:2} (a), these are not uniformly large. 

To be concrete, consider $\Gamma = K_3$, and let $\sigma = (1, 0)^{\dagger}$. Then $\sigma$ is stable and so immutable. However, $\sigma$ is not in the image of $L'$ on $\bb{Z}^{\V'}$ since $(L')^{-1} \sigma = (2/3, 1/3)^\dagger \notin \bb{Z}^{\V'}$. 
\hfill $\Diamond$ \end{example}

We view stable sandpiles as ``trivially immutable''. As such, it would be more fulfilling to have an example similar to that of Example \ref{ex:ex}, but with $\sigma$ unstable. This is supplied by the following.

\begin{example}\label{ex:thm2.2}
Let $\Gamma = K_3$ and consider the sandpile $\sigma = (4, 0)^\dagger$. This is not uniformly large and not stable. As in Example \ref{ex:complete1}, one can show that $u_{\sigma}^{\bb{Z}} = u_{\sigma}^{\bb{R}}   = (2, 1)^{\dagger}$ and so $\sigma$ is immutable. However, $\sigma$ is not in the image of $L': \bb{Z}^{\V'} \rightarrow \bb{Z}^{\V'}$ since $(L')^{-1} \sigma = (8/3, 4/3)^\dagger$. 
\hfill $\Diamond$ \end{example}

Now we turn to the necessity of the hypotheses of Corollary \ref{cor:2} (b). The following example is a little silly, but shows we do not get the condition $\sigma \geq 0$ for free, simply by assuming $\sigma = L'a$ for a uniformly large $a$. 

\begin{example}\label{ex:thm2}
Let $\Gamma = K_3$ and consider the uniformly large sandpile $a = (1,3)^\dagger$. Then $\sigma = L' a = (-1 , 5)^\dagger$ is not non-negative, and so not a sandpile.
\hfill $\Diamond$ \end{example}

Our last example illustrates that the hypothesis that $a$ is uniformly large cannot be removed entirely.

\begin{example}
Consider $\Gamma = K_4$ and $a = (2, 1, 1)^\dagger$, which is not uniformly large. Then $\sigma = L' a = (4, 0, 0)^\dagger \geq 0$. However, as we saw in Example \ref{ex:complete2}, $\sigma$ is mutable. 
\hfill $\Diamond$ \end{example}

\subsection{Proof of Corollary \ref{cor:1}}\label{sec:ProofOfCor1.1}

Let $v$ be as in the statement of the corollary. Define a sandpile $\sigma$ by $\sigma(v) = d(v)$ and $\sigma(w) = d(w)-1$ for $w \neq v$. This is clearly uniformly large. By Theorem \ref{thm:1}, to see that $\sigma$ is mutable, it suffices to show that $(L')^{-1}(\sigma - d+1) \in \bb{R}^{\V'}$ has a non-integral component. To do this, we will show that the $v$-component of the vector $(L')^{-1}(\sigma - d+1) $ lies in the interval $(0, 1)$.

Note that by the cofactor expansion for the inverse of $L'$, this $v$-component is given by $  L'_{v,v} / \det(L')$, where $L'_{v, v}$ is the minor of $L'$ obtained by deleting the column and row corresponding to $v$. It follows from Lemma \ref{cor:kirchoff} that $ \det(L') = \vert \Ss_1 \vert$ is the number of spanning trees of $\Gamma$; see Notation \ref{def:spanningsets}. Similarly, by Corollary \ref{cor:L'inverse}, the minor $L'_{v, v} = \vert \Ss_2(v, v)\vert $ is the number of spanning 2-forests in $\Gamma$, with one tree containing $v_*$ and the other containing $v$. We will show that (i) $\Ss_2(v, v)$ is not empty, so $L'_{v,v} / \det(L') >0$, and (ii) there is an injection $\F: \Ss_2(v, v) \hookrightarrow \Ss_1$ that is not surjective, so $ L'_{v,v} / \det(L')  < 1$; the corollary will follow. 

We will first show (i). Let $\mathcal{T}_1$ be the tree consisting of exactly the one vertex $v$. By the assumption (a) in the statement of the corollary, there is a disjoint tree $\mathcal{T}_0$ containing all vertices other than $v$. Then $( \T_0, \T_1) \in \Ss_2(v,v)$.

For (ii), recall we have assumed that $v_*$ is adjacent to $v$. Fix one edge $e$ that is incident to both $v$ and $v_*$. Now let $ ( \T_0, \T_1 ) \in \Ss_2(v, v)$ and assume $v_*$ is contained in $\T_0$. Note that neither $\T_0$ nor $\T_1$ contains $e$. Define $\F(( \T_0, \T_1 ))$ to be the subgraph of $\Gamma$ obtained from $\T_0 \cup \T_1$ by including this edge $e$. It is not hard to see that $\F(( \T_0, \T_1 ))$ is a tree, and this is necessarily a spanning tree since all vertices of $\Gamma$ are contained in it by construction. Thus $\F$ is well-defined, and its injectivity follows readily from the construction. Also, every spanning tree in the image of $\F$ contains the edge $e$. The failure of $\F$ to be surjective will therefore follow from the next claim.

\medskip

\noindent \emph{Claim}: There is a spanning tree of $\Gamma$ that does not contain the edge $e$. 

\medskip

We have assumed in (b) in the statement of the corollary that the degree of $v$ is at least 2. This implies there is another edge $e'$ incident to $v$. If $e'$ is also incident to $v_*$, then the claim follows by taking any spanning tree in the image of $\F$, deleting $e$ and replacing it by $e'$. We may therefore assume that $e'$ is incident to some other vertex $w \neq v, v_*$. Recall from (i) that there is a tree $\T$ in $\Gamma$ that contains all vertices except $v^*$. In particular, $\T$ contains $w$ and it does not contains $e$ (nor any other edges adjacent to $v$). Then adding the edge $e'$ to $\T$ produces the desired spanning tree. \qed

\section{Examples}\label{sec:Examples}

Here we explore Theorem \ref{thm:1} through the lens of three classes of graphs. We begin with trees. In this case, we find that all uniformly large sandpiles are immutable. Though the analysis for trees is particularly simple, the conclusion is instructive. We then move on to complete graphs and wheel graphs. In these examples, we provide a direct classification of uniformly large immutable sandpiles in terms of systems of linear equations. Moreover, the conditions of these classifications can be checked directly from the sandpile (e.g., without having to invert a matrix).

\subsection{Trees}\label{sec:Trees}

Assume that $\Gamma$ is a connected tree and choose any vertex as the sink.

\begin{theorem} \label{indiv path}
All uniformly large sandpiles are immutable. 
\end{theorem}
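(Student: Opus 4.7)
The plan is to apply Theorem \ref{thm:1} directly, reducing the claim to showing that $(L')^{-1}$ maps integer vectors to integer vectors. Since $\Gamma$ is a tree, it has exactly one spanning tree (itself). By Kirchoff's Matrix Tree Theorem (Corollary \ref{cor:kirchoff}), this gives $\det(L') = |\Ss_1| = 1$.

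With $\det(L') = 1$ in hand, I would invoke Corollary \ref{cor:L'inverse} to write
$$ (L')^{-1}_{ij} = \det(L')^{-1} \, |\Ss_2(v_i, v_j)| = |\Ss_2(v_i, v_j)| \in \bb{Z}_{\geq 0}. $$
Thus $(L')^{-1}$ is an integer matrix, and in particular $(L')^{-1}(\sigma - d + 1) \in \bb{Z}^{\V'}$ for every $\sigma \in \bb{Z}^{\V'}$. By Theorem \ref{thm:1}, uniformly large $\sigma$ are therefore immutable.

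There is no real obstacle here: the entire content of the argument is that the cardinality $\det(L') = 1$ for a tree is precisely what is needed to turn the integrality criterion of Theorem \ref{thm:1} into something automatic. (One could also argue this without Corollary \ref{cor:L'inverse}, using only that $L'$ is an integer matrix with $\det(L') = \pm 1$, so that its inverse must again be an integer matrix by Cramer's rule; I would mention this as an alternative.) The one thing worth highlighting at the end of the proof is that the ULH is essential even here --- Example \ref{ex:trivial} already shows that dropping it can fail the first conclusion of Theorem \ref{thm:1} on trees such as $P_2$.
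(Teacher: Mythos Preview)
Your proof is correct and essentially identical to the paper's: both use Kirchoff's theorem to get $\det(L') = 1$, deduce that $(L')^{-1}$ is an integer matrix, and then apply Theorem~\ref{thm:1}. One small slip in your closing aside: on $P_2$ the unique non-sink vertex has degree $1$, so $d \equiv 1$ and Example~\ref{ex:trivial} does not actually apply there; a tree like $P_3$ (with a non-leaf non-sink vertex) is the correct illustration.
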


\begin{proof}
By Kirchoff's Matrix Tree Theorem, we have $\det(L') =  1$ and so $L'$ is invertible over $\bb{Z}$. Thus $(L')^{-1}(\sigma - d+1) \in \bb{Z}^{\V'}$ is integral for all $\sigma \in \bb{Z}^{\V'}$. If $\sigma$ is uniformly large, then Theorem \ref{thm:1} implies $u_{\sigma}^{\bb{R}} =  (L')^{-1}(\sigma - d+1)$ and so $\sigma$ is immutable.
\end{proof}

\begin{example}\label{ex:immutablepath}
Let $\Gamma = P_3$ be a path with $3$ vertices, labeled as in Example \ref{path irl}. We claim that all sandpiles on $P_3$ are immutable. By Theorem \ref{indiv path} and Example \ref{ex:stable}, it suffices to verify the claim for those sandpiles that are neither stable, nor uniformly large. The only sandpiles of this type are of the form $\sigma = (0, k)^\dagger$ for $k \geq 1$. Then (\ref{eq:odo1},\ref{eq:odo2}) become
$$-2x + y \leq 1, \indent k + x - y \leq 0, \indent x, y \geq 0$$
for $u = (x, y)^\dagger$. Combining the first two, we have $k+x \leq y \leq 1 + 2x$, which implies $x \geq k -1$, and so $y \geq 2k - 1$. It follows that $u_\sigma^{\bb{R}} = (k-1, 2k-1)^{\dagger}$ is the minimal such solution. This is integral, and so $\sigma$ is immutable. 
\hfill $\Diamond$  \end{example}

Despite Examples \ref{ex:multik2} (with $k = 1$) and \ref{ex:immutablepath}, there are paths that admit mutable sandpiles, as the next example shows.

\begin{example}\label{ex:path4}
Consider the case where $\Gamma = P_4$ is the path with 4 vertices, labeled as before. Consider the sandpile $\sigma = (2, 0, 0)^\dagger$. Arguing as in the previous example, one can show that $u_\sigma^{\bb{R}} = (1/2, 0, 0)^{\dagger}$, while $u_\sigma^{\bb{Z}} = (1, 0, 0)^\dagger$. In particular, $\sigma$ is mutable. 

We note also that $(L')^{-1}(\sigma - d+1)$ is integral and $u_\sigma^{\bb{R}} \neq (L')^{-1}(\sigma - d+1)$ (which can be easily verified from the formula in Example \ref{path irl}). This therefore provides another example where the conclusions of Theorem \ref{thm:1} fail in the absence of the uniformly large hypothesis.
\hfill $\Diamond$ \end{example}

	\subsection{Complete Graphs}\label{sec:CompleteGraphs}
	
Let $\Gamma = K_{n+1}$ be the complete graph on $n+1 \geq 3$ vertices, and fix any vertex to be the sink $v_*$. Label the remaining vertices $v_1, \ldots, v_n$ and thus identify $\bb{R}^{\V'} \cong \bb{R}^n$. For $\sigma \in \bb{R}^{\V'}$, write $(\sigma_1, \ldots, \sigma_n)$ for its components under this identification.

\begin{theorem} \label{thm complete}
Suppose $\sigma \in \bb{Z}^{\V'}$. Then $\sigma$ is in the image of $L': \bb{Z}^{\V'} \rightarrow \bb{Z}^{\V'}$ if and only if 
\begin{equation} \label{eq:completemodn+1} \sigma_i - \sigma_j  \equiv  0 \text{ mod } n+1 , \indent \forall 1 \leq i, j \leq n.
\end{equation}
In particular, if $\sigma$ is uniformly large, then $\sigma$ is immutable if and only if (\ref{eq:completemodn+1}) holds.
\end{theorem}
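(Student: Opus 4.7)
The plan is to compute $(L')^{-1}$ in closed form and then read off the congruence condition directly. Every non-sink vertex of $K_{n+1}$ has degree $n$ and every pair of non-sink vertices is joined by exactly one edge, so the reduced Laplacian has the explicit form $L' = (n+1)I - J$, where $J$ is the $n \times n$ all-ones matrix. Since $J^2 = nJ$, a direct verification (or the standard formula for the inverse of $aI + bJ$) yields
$$(L')^{-1} = \frac{1}{n+1}(I + J).$$

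For the first assertion, I would use that $\sigma$ lies in the image of $L' : \bb{Z}^{\V'} \to \bb{Z}^{\V'}$ if and only if $(L')^{-1}\sigma$ is integral. Setting $S = \sum_{j=1}^n \sigma_j$, the $i$-th coordinate of $(L')^{-1}\sigma$ equals $(\sigma_i + S)/(n+1)$, so this integrality reduces to the scalar conditions $\sigma_i \equiv -S \pmod{n+1}$ for every $i$. These clearly imply $\sigma_i - \sigma_j \equiv 0 \pmod{n+1}$ for all $i, j$; conversely, if the pairwise congruence holds then all $\sigma_i$ share a common residue $c \pmod{n+1}$, giving $S \equiv nc \equiv -c \pmod{n+1}$ and hence $\sigma_i + S \equiv 0 \pmod{n+1}$. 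This establishes the first equivalence.

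For the ``in particular'' statement, Theorem \ref{thm:1} says that a uniformly large $\sigma$ is immutable if and only if $(L')^{-1}(\sigma - d + 1)$ is integral. The degree function is the constant $n$, so $d - 1 = (n-1)\mathbf{1}$, and the explicit form of $(L')^{-1}$ immediately gives $(L')^{-1}\mathbf{1} = \mathbf{1}$ (this is also an instance of Lemma \ref{lem:2} applied to the cone structure $K_{n+1} = \mathrm{cone}(K_n)$, since $K_n$ is regular). Consequently $(L')^{-1}(d - 1) = (n-1)\mathbf{1} \in \bb{Z}^{\V'}$, and $(L')^{-1}(\sigma - d + 1)$ is integral if and only if $(L')^{-1}\sigma$ is, which by the first part is exactly the stated congruence. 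I do not foresee a serious obstacle; the only delicate point is keeping the scalar arithmetic straight in the inversion formula, after which both assertions fall out essentially in parallel.
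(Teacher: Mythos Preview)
Your argument is correct and matches the paper's in its overall shape: compute $(L')^{-1}$ explicitly, read off that $(L')^{-1}\sigma$ has $i$-th entry $(\sigma_i + S)/(n+1)$, and reduce the integrality to the pairwise congruence; then handle the ``in particular'' via Theorem~\ref{thm:1} (the paper routes this through Corollary~\ref{cor:2}, which amounts to the same thing given Lemma~\ref{lem:2}).

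The one genuine difference is how $(L')^{-1}$ is obtained. You write $L' = (n+1)I - J$ and invert directly using $J^2 = nJ$, which is quick and self-contained. The paper instead proves Lemma~\ref{complete irl} combinatorially: it invokes Corollary~\ref{cor:L'inverse} to express $(L')^{-1}_{ij}$ as $\vert\Ss_2(v_i,v_j)\vert/\det(L')$, uses Cayley's formula for $\det(L')$, and then counts spanning 2-forests by a symmetry-and-double-counting argument. Your route is more elementary and efficient for $K_{n+1}$ specifically; the paper's route is there to illustrate the spanning-forest machinery of Section~\ref{sec:Laplacian}, which is what drives the harder wheel-graph computation later and is the unified method the paper is advertising.
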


We begin by computing the inverse of $L'$. 

\begin{lemma} \label{complete irl}
View $L'$ as a matrix relative to the identification $\bb{R}^{\V'} \cong \bb{R}^n$. Then
\[ (L')^{-1} = \frac{1}{n+1} \begin{pmatrix}
		2 & 1 & \dots & 1 \\
		1 & 2 & \dots & 1 \\
		\vdots & \vdots & \ddots & \vdots \\
		1 & 1 & \dots & 2
	\end{pmatrix} \]
\end{lemma}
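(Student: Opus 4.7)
The plan is to verify the claimed formula by direct matrix multiplication, exploiting the clean algebraic structure of the Laplacian of $K_{n+1}$. First, I would observe that with respect to the identification $\bb{R}^{\V'} \cong \bb{R}^n$, the reduced Laplacian has $n$ on the diagonal and $-1$ in every off-diagonal entry, so it can be written as
$$L' = (n+1) I - J,$$
where $I$ is the $n \times n$ identity and $J$ is the $n \times n$ all-ones matrix. Similarly, the matrix $M$ on the right-hand side of the claimed formula can be written as
$$M = \tfrac{1}{n+1}(I + J).$$

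Next, I would compute $L' M$ directly. Using the elementary identity $J^2 = nJ$ (which holds because $J$ is $n \times n$ with all entries equal to $1$), one has
$$L' M = \tfrac{1}{n+1} \bigl((n+1)I - J\bigr)(I + J) = \tfrac{1}{n+1}\bigl((n+1) I + (n+1) J - J - J^2\bigr) = \tfrac{1}{n+1}\bigl((n+1) I + nJ - nJ\bigr) = I.$$
Since $L'$ is invertible over $\bb{R}$ (Corollary \ref{cor:kirchoff}), this identifies $M$ with $(L')^{-1}$, proving the lemma.

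There is no serious obstacle here; the only mild subtlety is recognizing the $(n+1)I - J$ decomposition and recalling $J^2 = nJ$, both of which are immediate. As an alternative route (more in the spirit of Section \ref{sec:Kirchoff}, but requiring more bookkeeping), one could instead invoke Corollary \ref{cor:L'inverse}: by Kirchhoff's Matrix Tree Theorem combined with Cayley's formula $\det(L') = |\Ss_1| = (n+1)^{n-1}$, and then count $|\Ss_2(v_i,v_j)|$ directly via a combinatorial argument. I would favor the algebraic approach above since it is both shorter and self-contained.
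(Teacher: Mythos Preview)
Your proof is correct, but it takes a genuinely different route from the paper's. You verify the formula by direct matrix multiplication, writing $L' = (n+1)I - J$ and $M = \tfrac{1}{n+1}(I+J)$ and using $J^2 = nJ$; this is clean, short, and entirely self-contained. The paper instead follows the combinatorial route you sketch at the end: it invokes Corollary~\ref{cor:L'inverse}, uses Cayley's formula $\det(L') = (n+1)^{n-1}$, and then computes $\lvert \Ss_2(v_i,v_j)\rvert$ by a double-counting and symmetry argument (counting edges across all spanning trees to get the diagonal entries, and then exhibiting a 2-to-1 map $\Ss_2(v_i,v_i) \to \Ss_2(v_i,v_j)$ for $i \neq j$). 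Your algebraic argument is certainly the shorter path to this particular lemma; the paper's approach, on the other hand, serves as a warm-up application of the spanning-forest machinery of Section~\ref{sec:Kirchoff} and yields the explicit forest counts $\lvert \Ss_2(v_i,v_j)\rvert$ along the way, which is in keeping with the method used again for the wheel graphs in Section~\ref{sec:WheelGraphs}.
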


\begin{proof}[Proof of Lemma \ref{complete irl}]
Cayley's formula tells us that $\det(L') =  (n+1)^{n-1}$. By Corollary \ref{cor:L'inverse}, to determine the $(i, j)$-component $(L')^{-1}_{(i,j)}$, it therefore suffices to determine the size of $\Ss_2(v_i, v_j)$.

First assume $i = j$. Given an edge $e$ in $K_{n+1}$, let $n_e$ be the number of spanning trees that contain $e$. There is a one-to-one correspondence between $\Ss_2(v_i, v_i)$ and the set of spanning trees that contain the edge $e(i)$ connecting $v_0$ and $v_i$; that is $\vert \Ss_2(v_i, v_i) \vert = n_{e(i)}$. On the other hand, the symmetries of $K_{n+1}$ imply that $n_e$ is independent of $e$, so $\vert \Ss_2(v_i, v_i) \vert = n_e$ for all $e$. To obtain a formula for $n_e$, we will count the total number of edges in all spanning trees in two different ways. First, there are $n$ edges in each spanning tree, and Cayley's formula says that there are $(n+1)^{n-1}$ spanning trees in total. So the total number of all edges in all spanning trees is $n (n+1)^{n-1}$. Second, this total number can be counted by summing over $n_e$ as $e$ ranges over all edges. There are $n+1$ choose $2$ such edges, and since $n_e$ is independent of $e$, this combines with our first counting strategy to give
$$n (n+1)^{n-1}= \textrm{total number of edges in all spanning trees} = {n+1 \choose 2} \vert \Ss_2(v_i, v_i) \vert.$$
In summary, 
$$(L')^{-1}_{ii} = \vert \Ss_2(v_i, v_i) \vert  / \det(L') = 2 (n+1)^{n-2} / (n+1)^{n-1} = 2 / (n+1).$$

Now consider the case $i \neq j$. We will show there is a 2-1 correspondence from $\Ss_2(v_i, v_i)$ to $\Ss_2(v_i, v_j)$. For this, fix $\left( \T_0, \T_1 \right) \in \Ss_2(v_i, v_i)$, so $v_* \in \T_0$ and $v_i \in \T_1$. Then either $v_j \in \T_0$ or $v_j \in \T_1$, but both of these cannot happen simultaneously. By symmetry of the complete graph $\Gamma$, there is a one-to-one correspondence between those $\left( \T_0, \T_1 \right) \in \Ss_2(v_i, v_i)$ with $v_j \in \T_0$ and those with $v_j \in \T_1$. Thus $\vert \Ss_2(v_i, v_i) \vert = 2\vert \Ss_2(v_i, v_j) \vert$. It then follows from the previous case that 
$$(L')^{-1}_{ij} = \vert \Ss_2(v_i, v_j) \vert  / \det(L') =  1 / (n+1).$$ 
\end{proof}

\begin{proof}[Proof of Theorem \ref{thm complete}]
Note that $\Gamma = K_{n+1}$ is the cone of the regular graph $K_n$. We will show that the image of $L'$ is characterized by (\ref{eq:completemodn+1}); the remaining claim of Theorem \ref{thm complete} will then be immediate from Corollary \ref{cor:2}.

First assume $\sigma$ satisfies (\ref{eq:completemodn+1}). Then there exists $k \in \bb{Z}$ so that $\sigma_{i} = k \text{ mod } n+1$ for all $1 \leq i \leq n$. Let $a = (L')^{-1}(\sigma) \in \bb{R}^{\V'}$ and write $a_i =a(v_i)$ for the $i$th component; we need to show that the $a_i$ are integers. By Lemma \ref{complete irl} we have
\begin{equation}\label{eq:ui}
(n+1)a_{i}  =  2\sigma_{i} + \sumd{j \neq i} \sigma_{j}  =  \sigma_{i} + \sumd{j } \sigma_{j}  .
\end{equation}
Temporarily working mod $n+1$, our hypothesis on $\sigma$ gives 
$$\sigma_{i} + \sum_{j} \sigma_{j} \equiv k + (k n) \equiv  0\indent \mod n+1.$$ 
Thus $n+1$ divides $\sigma_{i} + \sum_{j } \sigma_{j} $ and so it follows from (\ref{eq:ui}) that $a_i \in \bb{Z}$, as desired. 

Conversely, assume that $\sigma = L' a$ for some $a \in \bb{Z}^{\V'}$. Note that
$$L' = \begin{pmatrix}
	n & -1 & \dots & -1 \\
	-1 & n & \dots & -1 \\
	\vdots & \vdots & \ddots & \vdots \\
	-1 & -1 & \dots & n
	\end{pmatrix}$$
and so
$$\sigma_{i} =  n a_{i} - \sumd{j \neq i} a_j =   (n+1) a_{i} - \sumd{j } a_j .$$
Thus $\sigma_{i} - \sigma_{j}  =  (n+1)(a_{i} - a_{j}) $. Since the $a_i$ are integers, this recovers (\ref{eq:completemodn+1}).
\end{proof}

	\subsection{Wheel Graphs}\label{sec:WheelGraphs}
	
	Let $\Gamma = W_{n+1}$ be a wheel graph on $n+1 \geq 4$ vertices. Choose the sink $v_*$ to be the central vertex, and label the boundary vertices $v_1, \ldots, v_n$ cyclically. We treat the index of the $v_i$ modulo $n$, so $i \in \bb{Z}/ n \bb{Z}$; this reflects the rotational symmetry of $W_{n+1}$. (In this section, we will therefore view $v_0 = v_n$, as opposed to our conventions of the previous sections where $v_0$ was the sink.)
	
	Given $\sigma \in \bb{R}^{\V'}$, we will write $\sigma_i \defeq \sigma(v_i)$. Write $F_k$ for the $k$th Fibonacci number, and $A_k$ for the $k$th Lucas number. Our main result is as follows. 
	
	\begin{theorem}\label{thm:wheel} 
	Let $\sigma \in \bb{Z}^{\V'}$. Then $\sigma$ is in the image of $L': \bb{Z}^{\V'} \rightarrow \bb{Z}^{\V'}$ if and only if the following holds for all $i \in \bb{Z}/n\bb{Z}$:
\begin{eqnarray} 
    \textrm{$n$ even:} & A_{n}\sigma_{i+n/2} + A_{0}\sigma_{i} + \sum_{m=1}^{\frac{n}{2}-1} A_{2m}(\sigma_{i+m}+\sigma_{i-m}) \equiv 0 \mod 5 F_n \label{eq:neven}\\
    \textrm{$n$ odd:} &     F_{n}\sigma_{i+(n+1)/2} + \sum_{m=1}^{\frac{n-1}{2}} F_{2m-1}(\sigma_{i+m}+\sigma_{i+1-m}) \equiv 0 \mod A_{n}. \label{eq:nodd}
\end{eqnarray}
	
	In particular, if $\sigma$ is uniformly large, then $\sigma$ is immutable if and only if (\ref{eq:neven},\ref{eq:nodd}) holds for all $i \in \bb{Z}/ n \bb{Z}$.
\end{theorem}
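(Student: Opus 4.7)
The plan is to establish the image characterization directly by computing $(L')^{-1}$; the ``in particular'' clause then follows from Corollary~\ref{cor:2}, since $W_{n+1}$ is the cone of the regular graph $C_n$ with cone point $v_*$. The wheel's rotational symmetry fixing the sink makes $L'$ an $n\times n$ circulant matrix with first row $(3,-1,0,\ldots,0,-1)$, so its eigenvalues are $\lambda_k = 3 - 2\cos(2\pi k/n)$ for $k = 0, \ldots, n-1$. I would invert $L'$ via the discrete Fourier transform, using the factorization $3 - 2\cos\theta = -e^{-i\theta}(e^{i\theta} - \phi^2)(e^{i\theta} - \phi^{-2})$, where $\phi$ is the golden ratio so that $\phi^{\pm 2}$ are the roots of $x^2 - 3x + 1$. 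A partial-fractions expansion of $\lambda_k^{-1}$ combined with the standard identity $\sum_{k} \omega^{k m}/(z - \omega^k) = nz^{m-1}/(z^n - 1)$ (with $\omega = e^{2\pi i/n}$) then collapses the inverse-DFT sum into a closed-form expression for each entry $(L')^{-1}_{0,\ell}$.

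Applying the Fibonacci/Lucas identities $\phi^k + \phi^{-k} = A_k$ for $k$ even (respectively $\sqrt 5\,F_k$ for $k$ odd) and $\phi^k - \phi^{-k} = \sqrt 5\,F_k$ for $k$ even (respectively $A_k$ for $k$ odd), to $\phi^{2n} - 1 = \phi^n(\phi^n - \phi^{-n})$ and to $\phi^{2\ell} + \phi^{2(n-\ell)} = \phi^n(\phi^{2\ell - n} + \phi^{n - 2\ell})$, I expect the computation to collapse to the clean formula
$$(L')^{-1}_{0,\ell} \;=\; \begin{cases} A_{\lvert n - 2\ell\rvert}/(5 F_n) & \text{if $n$ is even,}\\ F_{\lvert n - 2\ell\rvert}/A_n & \text{if $n$ is odd,} \end{cases}$$
for $\ell\in\{0,\ldots,n-1\}$. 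As a combinatorial alternative, one could instead use Corollary~\ref{cor:L'inverse} and count $\Ss_2(v_0,v_\ell)$ by parameterizing $\T_1$ as an arc of the cycle $C_n$ containing both $v_0$ and $v_\ell$ and using the classical fact that the fan on $k$ boundary vertices has $F_{2k}$ spanning trees; the two routes should yield the same closed form.

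With the formula for $(L')^{-1}$ in hand, integrality of $(L')^{-1}\sigma$ (equivalently, $\sigma \in L'(\bb{Z}^{\V'})$) reduces, after clearing the common denominator, to $\sum_{\ell \in \bb{Z}/n\bb{Z}} A_{|n-2\ell|}\,\sigma_{i+\ell} \equiv 0 \pmod{5 F_n}$ in the even case and $\sum_\ell F_{|n-2\ell|}\,\sigma_{i+\ell} \equiv 0 \pmod{A_n}$ in the odd case, for all $i$. To rewrite these as the symmetric sums (\ref{eq:neven}) and (\ref{eq:nodd}), I would perform a cyclic shift of $i$ (by $n/2$ for $n$ even, $(n+1)/2$ for $n$ odd): this reindexing converts the coefficient $A_{|n-2\ell|}$ at offset $\ell$ into $A_{2m}$ at the new offset $m$, and similarly $F_{|n-2\ell|}$ into $F_{2m-1}$; because the congruences are required for every $i$, the shift is absorbed. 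The main obstacle I anticipate is the parity-sensitive Fibonacci/Lucas bookkeeping (the value of $\phi^k \pm \phi^{-k}$ toggles between $A_k$ and $\sqrt 5\,F_k$ according to the parity of $k$), which is precisely what forces the different moduli ($5 F_n$ vs.\ $A_n$) and coefficient families ($A_{2m}$ vs.\ $F_{2m-1}$) in the two cases; once the closed form for $(L')^{-1}$ is established, the remaining verification is routine cyclic manipulation.
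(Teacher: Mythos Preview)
Your plan is sound and reaches the same destination as the paper, but the route to the explicit form of $(L')^{-1}$ is genuinely different. The paper computes $(L')^{-1}$ combinatorially: it invokes Corollary~\ref{cor:L'inverse}, parameterizes the tree $\T_1$ in a spanning $2$-forest as a boundary arc, and counts the possible $\T_0$'s via the fan result $\vert\{\text{spanning trees of }C(P_k)\}\vert = F_{2k}$ (proved there as a separate lemma). This yields $\vert\Ss_2(v_1,v_j)\vert = F_{2(n-j+1)} + F_{2(j-1)}$, which is then massaged, via the identity $A_{2n}-2 = 5F_n^2$ (even $n$) or $A_n^2$ (odd $n$), into exactly your closed form. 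Your primary route instead exploits the circulant structure of $L'$ and inverts it spectrally via the discrete Fourier transform and the factorization $3-z-z^{-1} = -z^{-1}(z-\phi^2)(z-\phi^{-2})$. Both are correct; the spectral approach is more systematic and would port to other circulant reduced Laplacians, while the paper's counting argument stays within the graph-theoretic framework it has already set up and gives each entry of $(L')^{-1}$ a direct combinatorial meaning. You even flag the paper's method as your ``combinatorial alternative,'' so you have anticipated it. Once $(L')^{-1}$ is in hand, your final paragraph (clearing denominators, then absorbing a cyclic shift of the index $i$ by $n/2$ or $(n+1)/2$) is exactly what the paper does, and the appeal to Corollary~\ref{cor:2} for the ``in particular'' clause matches as well. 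One small caution: the identity $\sum_k \omega^{km}/(z-\omega^k) = n z^{m-1}/(z^n-1)$ as you wrote it holds for $1\le m\le n$ but not for $m=0$, so be careful with the endpoint when you carry out the inverse DFT.
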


The proof of Theorem \ref{thm:wheel} is given in Section \ref{sec:WheelProofs}. As with complete graphs, our proof relies on an explicit formula for the inverse of the reduced Laplacian, which is computed in Section \ref{sec:CountingTrees}.

The system (\ref{eq:neven}) (resp. (\ref{eq:nodd})) consists of $n$ equations in the ring $\bb{Z}/5 F_n \bb{Z}$ (resp. $\bb{Z}/ A_n \bb{Z}$). Whenever one has a relatively prime factorization of $5F_n$ (resp. $A_n$), then the system (\ref{eq:neven}) (resp. (\ref{eq:nodd})) reduces in size. For example, if $n$ is even and $F_n$ and 5 are relatively prime, then the system (\ref{eq:neven}) is equivalent to the same system of congruences, but in the smaller rings $\bb{Z}/5 \bb{Z}$ and $\bb{Z}/ F_n \bb{Z}$, simultaneously. 

As another example, when $n = 2^k$ is a power of 2, we have $5F_{2^{k}}= 5\prod_{\ell=1}^{k-1} A_{2^{\ell}}$ (which can be seen by inducting on the identity $F_{2a} = A_a F_a$). This is a relatively prime factorization by \cite{McD}. As we show in Section \ref{sec:WheelProofs}, an extension of the argument of the previous paragraph produces the following.

	\begin{corollary} \label{cor:specialwheel}
Assume $n = 2^k$ for $k \geq 2$ and fix $\sigma \in \bb{Z}^{\V'}$. Then (\ref{eq:neven}) holds for all $i \in \bb{Z}/n \bb{Z}$ if and only if 
\begin{equation} \label{eq:mod5spw}
\sum_{m=0}^{2^{k}-1} (-1)^{m} \sigma_{m} \equiv  0 \mod 5
\end{equation}
and, for all $1 \leq \ell \leq k - 1$ and $0 \leq i \leq 2^{\ell} -1$,
\begin{equation}\label{eq:modAspw}
\sumdd{c=0}{2^{k-\ell}-1} (-1)^{c}\Big[A_{0}\sigma_{i + 2^{\ell}c} + \sumdd{j=1}{2^{\ell-1}-1}A_{2j}(\sigma_{i + 2^{\ell}c+j}+\sigma_{i + 2^{\ell}c-j})\Big]  \equiv  0 \mod A_{2^\ell}.
\end{equation}
\end{corollary}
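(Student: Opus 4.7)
The plan is to apply the Chinese Remainder Theorem to the relatively prime factorization $5 F_{2^k} = 5 \prod_{\ell=1}^{k-1} A_{2^\ell}$ discussed just before the corollary. Writing $S_i$ for the left-hand side of (\ref{eq:neven}) at index $i$, this reduces the problem to showing (a) $S_i \equiv 0 \pmod 5$ for all $i$ is equivalent to (\ref{eq:mod5spw}), and (b) for each $\ell \in \{1, \ldots, k-1\}$, the congruences $S_i \equiv 0 \pmod{A_{2^\ell}}$ for all $i \in \bb{Z}/n\bb{Z}$ are equivalent to (\ref{eq:modAspw}) for $0 \leq i \leq 2^\ell - 1$.

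For (a), one computes the period-$4$ reduction $A_k \pmod 5 : 2,1,3,4,2,1,\ldots$, from which $A_{2m} \equiv 2(-1)^m \pmod 5$. Substituting into $S_i$, using $(-1)^{n/2}=1$ (since $n/2 = 2^{k-1}$ is even for $k \geq 2$) and combining the terms $\sigma_{i-m}$ and $\sigma_{i+n-m}$ via $(-1)^m = (-1)^{n-m}$, one obtains $S_i \equiv 2 \sum_{j=0}^{n-1}(-1)^j \sigma_{i+j} \equiv 2(-1)^i \sum_{m=0}^{n-1}(-1)^m \sigma_m \pmod 5$. Since $2(-1)^i$ is a unit modulo $5$, the $n$ congruences indexed by $i$ collapse to the single statement (\ref{eq:mod5spw}).

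Part (b) is the heart of the argument, based on the sign-flipping periodicity
\[ A_{j + 2^{\ell+1}} \equiv -A_j \pmod{A_{2^\ell}}, \]
which follows from $\phi^{2^{\ell+1}} \equiv -1 \pmod{A_{2^\ell}}$ in $\bb{Z}[\phi]$---a consequence of $A_{2^\ell} = \phi^{2^\ell} + \psi^{2^\ell}$ together with $\phi \psi = -1$. Using this (and the fact that $2^{k-\ell}$ is even), the wrap-around coefficients $A_{2(n-m)}$ appearing in $S_i$ are all $\equiv A_{2m} \pmod{A_{2^\ell}}$, giving $S_i \equiv \sum_{m=0}^{n-1} A_{2m} \sigma_{i+m}$. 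Then one groups $m = r + c \cdot 2^\ell$ with $r \in \{0,\ldots,2^\ell-1\}$, $c \in \{0,\ldots,2^{k-\ell}-1\}$ and uses $A_{2m} \equiv (-1)^c A_{2r}$ to obtain
\[ S_i \equiv \sum_{r=0}^{2^\ell - 1} A_{2r}\, T_{i+r} \pmod{A_{2^\ell}}, \qquad T_j \defeq \sum_{c=0}^{2^{k-\ell}-1} (-1)^c \sigma_{j + c \cdot 2^\ell}. \]
A short cyclic computation (using $\sigma_{j+n}=\sigma_j$ and that $2^{k-\ell}$ is even) yields $T_{j+2^\ell} = -T_j$, so $S_{i+2^\ell} \equiv -S_i$, reducing the $n$ congruences to the $2^\ell$ indexed by $i \in \{0,\ldots,2^\ell-1\}$.

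It then remains to fold the $r$-range from $\{0,\ldots,2^\ell-1\}$ down to $\{0,\ldots,2^{\ell-1}-1\}$. The middle term $r = 2^{\ell-1}$ vanishes because $A_{2^\ell} \equiv 0$; for $r$ with $2^{\ell-1} < r \leq 2^\ell - 1$, I set $s = 2^\ell - r \in \{1,\ldots,2^{\ell-1}-1\}$ and use $A_{2r} = A_{2^{\ell+1} - 2s} \equiv -A_{2s}$ together with $T_{i+r} = T_{i + 2^\ell - s} = -T_{i-s}$ to obtain $A_{2r} T_{i+r} \equiv A_{2s} T_{i-s}$. Symmetrizing the $r > 2^{\ell-1}$ sum with the $r < 2^{\ell-1}$ sum and then expanding the definition of $T$ produces exactly the expression
\[ \sum_{c=0}^{2^{k-\ell}-1} (-1)^c \Big[ A_0 \sigma_{i + 2^\ell c} + \sum_{j=1}^{2^{\ell-1}-1} A_{2j}\bigl(\sigma_{i + 2^\ell c + j} + \sigma_{i + 2^\ell c - j}\bigr)\Big] \]
appearing on the left of (\ref{eq:modAspw}). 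The main obstacle will be the bookkeeping in this final step, which interleaves the $m \leftrightarrow n - m$ symmetrization used to put $S_i$ in cyclic form, the $r \leftrightarrow 2^\ell - r$ reflection, and the sign-flip periodicity of $T$; once the three algebraic identities above are in hand, however, the computation is direct.
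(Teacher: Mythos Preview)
Your proposal is correct and follows essentially the same route as the paper: CRT on $5F_{2^k}=5\prod_{\ell=1}^{k-1}A_{2^\ell}$, the cyclic rewriting $S_i\equiv\sum_{m=0}^{n-1}A_{2m}\sigma_{i+m}$, the sign-flip $S_{i+2^\ell}\equiv -S_i$ to cut the index range, and the $r\leftrightarrow 2^\ell-r$ reflection to fold the inner sum. The only differences are packaging: the paper derives the Lucas periodicity $A_{2(j+c\,2^\ell)}\equiv(-1)^cA_{2j}\pmod{A_{2^\ell}}$ from the addition formula $A_{a+b}=A_aA_b-(-1)^bA_{a-b}$ (its Lemma on even Lucas numbers) rather than your Binet-style argument in $\bb{Z}[\phi]$, and it does the final bookkeeping directly on the double sum rather than via your auxiliary $T_j=\sum_c(-1)^c\sigma_{j+c\,2^\ell}$; your $T_j$ device actually makes the folding step cleaner.
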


As suggested above, the main utility of Corollary \ref{cor:specialwheel} over Theorem \ref{thm:wheel} is that the rings of the former are significantly smaller. We note also the restriction on the index $i$ in the corollary, which implies the systems expressed in (\ref{eq:mod5spw}) and (\ref{eq:modAspw}) contain a total of $n - 1 = 2^k -1$ equations (so there is one less equation than (\ref{eq:neven})). Another simplification expressed by the corollary is that the Lucas numbers appearing in (\ref{eq:mod5spw}) and (\ref{eq:modAspw}) have already been reduced mod $5$ and $A_{2^\ell}$, respectively. This reflects various Lucas number identities that are special for powers of 2; see Lemma \ref{lem:mod A conj}. We encourage the reader to write out the system (\ref{eq:neven}), and then (\ref{eq:mod5spw},\ref{eq:modAspw}) for $n = 2$ and $4$ to get a sense for the symmetries expressed by the latter system that are hidden in the former. 

Finally, we want to emphasize the helpfulness of experimental math, which we found to be an invaluable tool for identifying the results of this section. Indeed, the viability of the present wheel graph example began through various computations of immutability with the assistance of the computer algebra system Sage. When working with general (even) $n$, we initially observed no discernible pattern. However, when we specialized to $n = 2^k$, a pattern clearly emerged and we were able to conjecture the formulas of (\ref{eq:mod5spw},\ref{eq:modAspw}). After that, we wrote down a proof of the expression for the Laplacian given Theorem \ref{thm:laplacewheel}. We were once again aided by the computer, which suggested our initial formulas could be reduced significantly. Moreover, due to the overwhelming computational evidence, this led us to guess at the relative primality of $A_{2^\ell}$ for distinct $\ell \geq 1$. We are grateful to a Mathematics Stack Exchange answer by Peter Woolfitt for pointing out the reference \cite{McD}, where this relative primality is established.

\subsubsection{Counting trees and 2-forests}\label{sec:CountingTrees}

The main result of this section is the following explicit formula for the inverse of $L'$.

\begin{theorem}\label{thm:laplacewheel}
If $n$ is even, then
$$(L')^{-1} = \frac{1}{5 F_n} \begin{pmatrix}
                            A_{n} & A_{n-2} & A_{n-4} & \dots & A_2 & A_{0} & A_{2} & \dots & A_{n-2} \\
							A_{n-2} & A_{n} & A_{n-2} & \dots & A_4 & A_{2} & A_{0} & \dots & A_{n-4} \\
							A_{n-4} & A_{n-2} & A_{n} & \dots &A_6 & A_{4} & A_{2} & \dots & A_{n-6} \\
							\vdots & \vdots & \vdots & \ddots & \vdots & \vdots &\vdots& \ddots & \vdots \\
							A_2 & A_4 & A_6 & \ldots & A_{n} & A_{n-2} & A_{n-4} & \ldots & A_0 \\
							A_{0} & A_{2} & A_{4} & \dots & A_{n-2} & A_{n} & A_{n-2} & \dots & A_{2} \\
								A_{2} & A_{0} & A_{2} & \dots &A_{n-4} & A_{n-2} & A_{n} & \dots & A_{4} \\
							\vdots & \vdots & \vdots & \ddots & \vdots & \vdots &\vdots& \ddots & \vdots \\
							A_{n-2} & A_{n-4} & A_{n-6} & \dots &A_0 & A_{2} & A_{4} & \dots & A_{n}
							\end{pmatrix}.$$
							
If $n$ is odd, then
$$(L')^{-1} = \frac{1}{A_n} \begin{pmatrix}
							F_{n} & F_{n-2} & F_{n-4} & \dots & F_3 & F_{1} & F_{1} & \dots & F_{n-2} \\
							F_{n-2} & F_{n} & F_{n-2} & \dots &F_5 & F_{3} & F_{1} & \dots & F_{n-4} \\
							F_{n-4} & F_{n-2} & F_{n} & \dots & F_7 &F_{5} & F_{3} & \dots & F_{n-6} \\
							\vdots & \vdots & \vdots & \ddots & \vdots & \vdots & \vdots & \ddots & \vdots \\
							F_3 & F_5 & F_7 & \ldots & F_{n} & F_{n-2} & F_{n-4} & \ldots & F_1\\
								F_{1} & F_{3} & F_{5} & \dots &F_{n-2} & F_{n} & F_{n-2} & \dots & F_{1} \\
							F_{1} & F_{1} & F_{3} & \dots & F_{n-4} & F_{n-2} & F_{n} & \dots & F_{3} \\
							\vdots & \vdots & \vdots & \ddots & \vdots & \vdots & \vdots & \ddots & \vdots \\
							F_{n-2} & F_{n-4} & F_{n-6} & \dots &F_1 &  F_{1} & F_{3} & \dots & F_{n}
							\end{pmatrix}$$

\end{theorem}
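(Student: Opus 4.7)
The approach is to avoid computing $\det(L')$ directly, and instead verify the formulas by direct matrix multiplication. Writing $M$ for the matrix displayed in each case (without its scalar prefactor), the goal is to show that
$$L' M = c \cdot I_n, \qquad c = \begin{cases} 5 F_n & n \text{ even}, \\ A_n & n \text{ odd}. \end{cases}$$
Since the sink is the hub of $W_{n+1}$, every non-sink vertex has degree $3$, so $L' = 3 I_n - P$, where $P$ is the adjacency matrix of the cycle $C_n$ on $\{v_1, \ldots, v_n\}$. Moreover, $M$ is symmetric and circulant, so by the cyclic symmetry of $W_{n+1}$ it suffices to check that the first row of $L'M$ equals $(c, 0, \ldots, 0)$.

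Let $(a_0, \ldots, a_{n-1})$ denote the first row of $M$, so that $a_m = X_{n - 2\min(m,\, n-m)}$ where $X$ is $A$ for $n$ even and $F$ for $n$ odd. The entry $(L'M)_{0, k}$ equals $3 a_k - a_{k-1} - a_{k+1}$ (indices mod $n$). For $k$ bounded away from $0$ and from the palindromic midpoint of the row, $a_{k \pm 1} = X_{m \pm 2}$ with $m = n - 2k$, so this entry reduces to the three-term identity
$$3 X_m - X_{m+2} - X_{m-2} = 0,$$
which follows from applying the recurrence $X_{r+1} = X_r + X_{r-1}$ twice.

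The scalar $c$ arises at $k = 0$, where $a_0 = X_n$ and $a_1 = a_{n-1} = X_{n-2}$, giving
$$3 X_n - 2 X_{n-2} = X_{n+2} - X_{n-2}.$$
This evaluates to $5 F_n$ when $X = A$ and to $A_n$ when $X = F$, using the identities $A_{m+2} - A_{m-2} = 5 F_m$ and $F_{m+2} - F_{m-2} = A_m$ (quick consequences of Binet's formula). Finally, the palindromic midpoint of the row must be handled separately: for $n$ even, at $k = n/2$ one has $a_{n/2} = A_0 = 2$ and $a_{(n/2) \pm 1} = A_2 = 3$, giving $3 A_0 - 2 A_2 = 0$; for $n$ odd, positions $k = (n \pm 1)/2$ both have $a_k = F_1 = 1$, and one computes directly $3 F_1 - F_3 - F_1 = 0$.

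The principal obstacle is purely organizational: carefully tracking the palindromic structure of the row of $M$ and isolating where the three-term recurrence must be replaced by the small boundary identities at the row's midpoint. An alternative route would apply Corollary \ref{cor:L'inverse} and count the spanning $2$-forests $|\Ss_2(v_i, v_j)|$ combinatorially by cutting the cycle and enumerating the resulting path pieces with Fibonacci/Lucas recursions; this works but seems no shorter than the direct verification sketched above.
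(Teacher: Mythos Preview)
Your argument is correct and substantially different from the paper's. You verify the formula by direct matrix multiplication: exploiting that $L' = 3I_n - P$ for the cycle adjacency matrix $P$ and that both $L'$ and $M$ are circulant, you reduce to checking a single row, where everything collapses to the three-term Fibonacci/Lucas identity $3X_m = X_{m+2} + X_{m-2}$ together with two small boundary checks. (One cosmetic slip: the indexing should read $a_{k-1} = X_{m+2}$ and $a_{k+1} = X_{m-2}$, not $a_{k\pm1}=X_{m\pm2}$; since both terms appear symmetrically in $3X_m - X_{m+2} - X_{m-2}$ this is harmless.)

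The paper instead takes the combinatorial route you mention as an alternative: it invokes Corollary~\ref{cor:L'inverse} to write $(L')^{-1}_{ij} = |\Ss_2(v_i,v_j)|/\det(L')$, counts the $2$-forests by conditioning on the number $m$ of vertices in the boundary path $\T_1$ (which requires a preliminary lemma that $C(P_k)$ has $F_{2k}$ spanning trees), sums the resulting expression to $F_{2(n-j+1)}+F_{2(j-1)}$, and finally proves a separate Fibonacci/Lucas identity to match this against $A_{n-2(j-1)}/(5F_n)$ or $F_{n-2(j-1)}/A_n$. Your verification is markedly shorter and needs none of the Section~\ref{sec:Laplacian} machinery; the paper's route, on the other hand, is chosen to illustrate that machinery in action and yields the combinatorial byproduct $|\Ss_2(v_1,v_j)| = F_{2(n-j+1)}+F_{2(j-1)}$, which has independent interest.
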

	
	For the proof, we will need a preliminary result. Let $P_k$ be a path with $k$ vertices, and let $C(P_k)$ be its cone. Note that $C(P_k)$ can be equivalently characterized as the wheel graph $W_{k+1}$ on $k+1$ vertices with one boundary edge deleted.

\begin{lemma}\label{lem:spanforcone}
For any $k \geq 1$, the number of spanning trees of $C(P_k)$ is $F_{2k}$.
\end{lemma}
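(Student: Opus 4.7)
My plan is to apply Kirchhoff's Matrix Tree Theorem (Corollary \ref{cor:kirchoff}) to reduce the problem to computing $\det(L'_k)$, where $L'_k$ denotes the reduced Laplacian of $C(P_k)$ with sink the cone point. Labeling $P_k = v_1 v_2 \cdots v_k$, the endpoints $v_1$ and $v_k$ have degree $2$ in $C(P_k)$ while all interior vertices have degree $3$. Hence, for $k \geq 2$, the matrix $L'_k$ is the $k \times k$ tridiagonal matrix with diagonal $(2, 3, 3, \ldots, 3, 2)$ and $-1$ on each sub- and super-diagonal; for $k = 1$ we have $L'_1 = (1)$. The cases $k = 1, 2$ are direct: the determinants are $1 = F_2$ and $3 = F_4$.

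To handle $k \geq 3$, I would introduce two auxiliary tridiagonal determinants. Let $D_m$ be the determinant of the $m \times m$ tridiagonal matrix with $3$'s on the diagonal and $-1$'s off-diagonal, and let $E_m$ be the determinant of the same matrix but with the $(m, m)$-entry replaced by $2$. Cofactor expansion of $D_m$ along its first row yields the three-term recursion $D_m = 3 D_{m-1} - D_{m-2}$ with $D_1 = 3$ and $D_2 = 8$. Combined with the Fibonacci identity $F_{2m+2} = 3 F_{2m} - F_{2m-2}$, this gives $D_m = F_{2m+2}$ by induction. A similar expansion of $E_m$ along its last row gives $E_m = 2 D_{m-1} - D_{m-2} = 2 F_{2m} - F_{2m-2} = F_{2m+1}$, using the identity $F_{2m+1} = 2 F_{2m} - F_{2m-2}$.

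Next, expand $\det(L'_k)$ along its first row $(2, -1, 0, \ldots, 0)$. The $(1,1)$-minor is $E_{k-1} = F_{2k-1}$, since deleting the first row and column of $L'_k$ leaves precisely the matrix defining $E_{k-1}$. The $(1,2)$-minor is a $(k-1) \times (k-1)$ matrix whose first column contains a single nonzero entry $-1$ at the top; a further expansion along this column reduces it to $-E_{k-2} = -F_{2k-3}$. Assembling these pieces gives $\det(L'_k) = 2 F_{2k-1} - F_{2k-3} = F_{2k}$, where the final equality is the Fibonacci identity $F_{2k} = 2 F_{2k-1} - F_{2k-3}$.

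No step presents a real obstacle; the entire argument is a pair of cofactor expansions together with small Fibonacci-identity checks. The only care needed is tracking signs in the expansions and properly accounting for the modified corner entries of $L'_k$.
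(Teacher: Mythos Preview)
Your proof is correct, but it proceeds along a different axis than the paper's. The paper argues combinatorially: it sets $\tau_k$ equal to the number of spanning trees of $C(P_k)$, verifies $\tau_1 = F_2$ and $\tau_2 = F_4$, and then, by partitioning spanning trees of $C(P_{k+1})$ according to which of the ``new'' edges $e_s, e_k, e_{k-1}, \ldots$ they contain, obtains the telescoping relation $\tau_{k+1} = 2\tau_k + \sum_{j=1}^{k-1}\tau_j + 1$, from which $\tau_{k+1} = 3\tau_k - \tau_{k-1}$ follows. You instead invoke Kirchhoff's theorem (Corollary~\ref{cor:kirchoff}) to reduce to the determinant of the tridiagonal matrix $L'_k$, and then peel off the modified corner entries via cofactor expansions and the auxiliary quantities $D_m = F_{2m+2}$, $E_m = F_{2m+1}$. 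Your route is more mechanical and generalizes readily to other fan-type graphs whose reduced Laplacians are perturbed tridiagonal matrices; the paper's route gives a direct bijective explanation of the recurrence and avoids any matrix algebra. One small bookkeeping point: your derivation $E_m = 2D_{m-1} - D_{m-2}$ via the last-row expansion needs $m \geq 2$, so to cover the case $k = 3$ (which requires $E_1$) you should either check $E_1 = 2 = F_3$ directly or adopt the convention $D_0 = 1$, $D_{-1} = 0$.
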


\begin{proof}
Let $\tau_k$ be the number of spanning trees of $C(P_k)$. It is easy to check that
$$\tau_1  = 1 = F_2, \indent \tau_2 = 3 = F_4$$
and that $f_k = F_{2k}$ satisfies
\begin{equation}\label{eq:identifier}
f_{k+1} = 3 f_k - f_{k - 1}, \indent \forall k \geq 2.
\end{equation}
By the uniqueness of solutions of difference equations, it therefore suffices to verify that $f_k = \tau_k$ satisfies (\ref{eq:identifier}) as well. 

For this, let $e_1, \ldots, e_{k}$ be the consecutive edges of $P_{k+1}$, viewed as edges in $C(P_{k+1})$. Viewing $C(P_{k+1})$ as $W_{k+2}$ with one edge deleted, let $e_s$ be the spoke that shares a vertex with $e_{k}$; see Figure \ref{figure3}.

\vspace{1cm}
\begin{figure}[h]
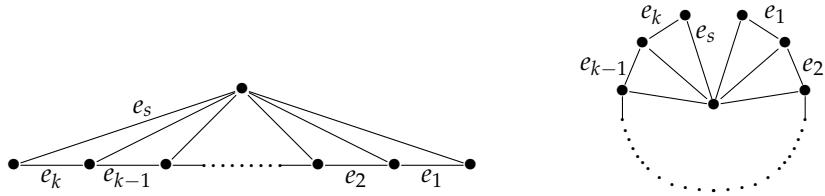

\centerline{\xy /r1.2pc/:,
(-3,0)*={\xy
		(-10,0)*{};
		(-6,0)*{\bullet}="A";
		(-4,0)*{\bullet}="B";
		(-2,0)*{\bullet}="B2";
		(-1,0)*{}="C1";
		(1,0)*{}="C2";
		(2,0)*{\bullet}="D2";
		(4,0)*{\bullet}="D";
		(6,0)*{\bullet}="E";
		(0,2)*{\bullet}="X";
		"A";"X" **\dir{-};
		"B";"X" **\dir{-};
		"B2";"X" **\dir{-};
		"D2";"X" **\dir{-};
		"D";"X" **\dir{-};
		"E";"X" **\dir{-};
		"A";"B" **\dir{-};
		"B";"B2" **\dir{-};
		"B2";"C1" **\dir{-};
		"C1";"C2"**\crv{~*=<4pt>{.} (0,0)};
		"C2";"D2" **\dir{-};
		"D2";"D" **\dir{-};
		"D";"E" **\dir{-};
(-5,-.35)*{e_k};
(-3,-.35)*{e_{k-1}};
(3,-.35)*{e_{2}};
(5,-.35)*{e_1};
(-2.6,1.5)*{e_s};
\endxy};
(11,1)*={\xy /r3pc/:,
		(0,0)*{\bullet}="v*";
		(.3,.94)*{\bullet}="v2";
		(.75,.65)*{\bullet}="v3";
		(.96,.15)*{\bullet}="v4";
		(.96,-.15)*{}="v5";
		(-.3,.94)*{\bullet}="w2";
		(-.75,.65)*{\bullet}="w3";
		(-.96,.15)*{\bullet}="w4";
		(-.96,-.15)*{}="w5";
		(-.65,.95)*{e_k};
		(-1.15,.4)*{e_{k-1}};
		(1.05,.4)*{e_{2}};
		(.65,.95)*{e_1};
		(-.08,.75)*{e_s};
		"v*";"v2" **\dir{-};
		"v*";"v3" **\dir{-};
		"v*";"v4" **\dir{-};
		"v2";"v3" **\dir{-};
		"v3";"v4" **\dir{-};
		"v4";"v5" **\dir{-};
		"v*";"w2" **\dir{-};
		"v*";"w3" **\dir{-};
		"v*";"w4" **\dir{-};
		"w2";"w3" **\dir{-};
		"w3";"w4" **\dir{-};
		"w4";"w5" **\dir{-};
		"w5";"v5" **\crv{~*=<4pt>{.} (-.8,-.8)&(0,-1)&(.8,-.8)};
\endxy};
\endxy}
\vspace{1cm}
\caption{Pictured here are two illustrations of $C(P_{k+1})$, with edges labeled as described above. On the left, the graph $C(P_{k+1})$ is viewed as the cone on a path with $k+1$ vertices (hence $k$ edges). The figure on the right illustrates $C(P_{k+1})$ as the wheel graph $W(P_{k+2})$ on $k+2$ vertices, with one edge removed. \hfill $\Diamond$}
\label{figure3}
\end{figure}

Now we can view $C(P_k)$ as the subgraph of $C(P_{k+1})$ obtained by deleting the edges $e_k$ and $e_s$, as well as their common vertex. From this perspective, the following hold by direct observation:

\begin{itemize}
\item There are $\tau_{k}$ spanning trees of $C(P_{k+1})$ that do not contain $e_s$. 

\item Of those that do contain $e_s$, there are $\tau_{k}$ that do not contain $e_k$. 

\item Of those that contain $e_s$ and $e_k$, there are $\tau_{k-1}$ that do not contain $e_{k-1}$. 

\item Of those that contain $e_s, e_k,$ and $e_{k-1}$, there are $\tau_{k-2}$ that do not contain $e_{k-2}$. 

 $\vdots$

\item Of those that contain $e_s, e_k, e_{k-1} , \ldots,$ and $e_{2}$, there are $\tau_1$ that do not contain $e_1$.

\item There is one spanning tree that contains $e_s, e_k, \ldots, e_2,$ and $e_1$. 
\end{itemize}

It follows that $\tau_{k+1} = \tau_k + \tau_k + ( \sum_{j = 1}^{k-1} \tau_j ) + 1$ for all $ k \geq 1$. Replacing $k$ with $k -1$, we obtain $\tau_{k} = \tau_{k -1 } + (\sum_{j = 1}^{k-1} \tau_j ) + 1$ for all $k \geq 2$. 
Combining these gives $\tau_{k+1} = \tau_k + \tau_k + \tau_k - \tau_{k - 1}$ for $k \geq 2$, which is (\ref{eq:identifier}). 
\end{proof}

\begin{proof}[Proof of Theorem \ref{thm:laplacewheel}] 
 As shown in \cite{BP}, the number of spanning trees of $W_{n+1}$ is $\det(L') = A_{2n}-2$. It then follows from Corollary \ref{cor:L'inverse} that the $(i, j)$-component of the inverse of $L'$ is given by 
$$(L')^{-1}_{ij} = \frac{1}{A_{2n} - 2}\vert \Ss_2(v_i, v_j) \vert. $$
Due to the cyclic symmetry of wheel graphs, it suffices to compute this under the assumption that $i = 1$ and $1 \leq j \leq n$. (In the matrix for $(L')^{-1}$ claimed in the statement of Theorem \ref{thm:laplacewheel}, this symmetry manifests itself as each row being a cyclic permutation of the previous.) Similarly, we may assume that the indexing is such that, as we proceed from $v_1$ to $v_j$, we have gone no more than half-way around the boundary cycle; that is, we may assume
\begin{equation}\label{eq:jineq}
j \leq n/2 +1.
\end{equation}
This can be achieved by either relabeling the vertices in the opposite direction, or reflecting over the line through $v_*$ and $v_1$, which amounts to the graph automorphism $v_i \mapsto v_{n+2-i}$. See Figure \ref{figure4}. (In the claimed matrix for $(L')^{-1}$, this reflection-symmetry manifests itself as the symmetry in the top row about the term $A_0$ when $n$ is even, and between the two $F_1$ terms when $n$ is odd.)

\begin{figure}[h]
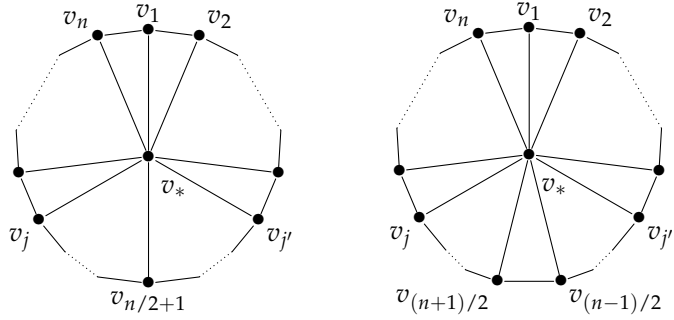

\centerline{\xy /r4pc/:,
(-1.5,0)*={\xy
		(0,0)*{\bullet}="v*";
		(0,1)*{\bullet}="v1";
		"v1"*+!D{\hbox{${{v_1}}$}};
		(.4,.95)*{\bullet}="v2";
		"v2"*+!DL{\hbox{${{v_2}}$}};
		(.7,.8)*{}="v3";
		(-.4,.95)*{\bullet}="vn";
		"vn"*+!DR{\hbox{${{v_n}}$}};
		(-.7,.8)*{}="vn1";
		(1.0227, -.1286)*{\bullet}="vjj1+";
		(1.0428, .2062)*{}="vjj15+";
		(.66, -.75)*{}="vjj1-";
		(.866, -.5)*{\bullet}="vjj";
		"vjj"*+!UL{\hbox{${{v_{j'}}}$}};
		(-1.0227, -.1286)*{\bullet}="vj1+";
		(-1.0428, .2062)*{}="vj15+";
		(-.66, -.75)*{}="vj1-";
		(-.866, -.5)*{\bullet}="vj";
		"vj"*+!UR{\hbox{${{v_{j}}}$}};
		(0, -1)*{\bullet}="vn2";
		"vn2"*+!U{\hbox{${{v_{n/2+1}}}$}};
		(.4,-.95)*{}="vn2-";
		(-.4,-.95)*{}="vn2+";
		"v*";"vn2" **\dir{-};
		"v*";"v1" **\dir{-};
		"v*";"v2" **\dir{-};
		"v*";"vn" **\dir{-};
		"v*";"vj" **\dir{-};
		"v*";"vj1+" **\dir{-};
		"v*";"vjj" **\dir{-};
		"v*";"vjj1+" **\dir{-};
		"v1";"v2" **\dir{-};
		"v1";"vn" **\dir{-};
		"v2";"v3" **\dir{-};
		"vn";"vn1" **\dir{-};
		"vjj15+";"vjj1+" **\dir{-};
		"vjj1+";"vjj" **\dir{-};
		"vjj1-";"vjj" **\dir{-};
		"vj15+";"vj1+" **\dir{-};
		"vj1+";"vj" **\dir{-};
		"vj1-";"vj" **\dir{-};
		"vn2-";"vn2" **\dir{-};
		"vn2+";"vn2" **\dir{-};
		"vjj1-";"vn2-" **\dir{.};
		"vj1-";"vn2+" **\dir{.};
		"v3";"vjj15+" **\dir{.};
		"vn1";"vj15+" **\dir{.};
		(.2,-.25)*{v_*};
\endxy};
(1.5,0)*={\xy
		(0,0)*{\bullet}="v*";
		(0,1)*{\bullet}="v1";
		"v1"*+!D{\hbox{${{v_1}}$}};
		(.4,.95)*{\bullet}="v2";
		"v2"*+!DL{\hbox{${{v_2}}$}};
		(.7,.8)*{}="v3";
		(-.4,.95)*{\bullet}="vn";
		"vn"*+!DR{\hbox{${{v_n}}$}};
		(-.7,.8)*{}="vn1";
		(1.0227, -.1286)*{\bullet}="vjj1+";
		(1.0428, .2062)*{}="vjj15+";
		(.66, -.75)*{}="vjj1-";
		(.866, -.5)*{\bullet}="vjj";
		"vjj"*+!UL{\hbox{${{v_{j'}}}$}};
		(-1.0227, -.1286)*{\bullet}="vj1+";
		(-1.0428, .2062)*{}="vj15+";
		(-.66, -.75)*{}="vj1-";
		(-.866, -.5)*{\bullet}="vj";
		"vj"*+!UR{\hbox{${{v_{j}}}$}};
		(.25, -1)*{\bullet}="vn-";
		"vn-"*+!UL{\hbox{${{v_{(n-1)/2}}}$}};
		(-.25, -1)*{\bullet}="vn+";
		"vn+"*+!UR{\hbox{${{v_{(n+1)/2}}}$}};
		(.5,-.91)*{}="vn2-";
		(-.5,-.91)*{}="vn2+";
		"v*";"vn+" **\dir{-};
		"v*";"vn-" **\dir{-};
		"v*";"v1" **\dir{-};
		"v*";"v2" **\dir{-};
		"v*";"vn" **\dir{-};
		"v*";"vj" **\dir{-};
		"v*";"vj1+" **\dir{-};
		"v*";"vjj" **\dir{-};
		"v*";"vjj1+" **\dir{-};
		"v1";"v2" **\dir{-};
		"v1";"vn" **\dir{-};
		"v2";"v3" **\dir{-};
		"vn";"vn1" **\dir{-};
		"vjj15+";"vjj1+" **\dir{-};
		"vjj1+";"vjj" **\dir{-};
		"vjj1-";"vjj" **\dir{-};
		"vj15+";"vj1+" **\dir{-};
		"vj1+";"vj" **\dir{-};
		"vj1-";"vj" **\dir{-};
		"vn2-";"vn-" **\dir{-};
		"vn2+";"vn+" **\dir{-};
		"vn-";"vn+" **\dir{-};
		"vjj1-";"vn2-" **\dir{.};
		"vj1-";"vn2+" **\dir{.};
		"v3";"vjj15+" **\dir{.};
		"vn1";"vj15+" **\dir{.};
		(.2,-.25)*{v_*};
\endxy};
\endxy}
\vspace{2cm}
\caption{Illustrated here is the case where $j > n/2 + 1$. The figure on the left (resp. right) is for $n$ even (resp. odd). Notice that in both pictures, reflecting across the vertical line from $v_*$ to $v_1$ sends $v_j$ to $v_{j'}$, where $j' \defeq n+2-j< n/2+1$. \hfill $\Diamond$}
\label{figure4}
\end{figure}

Fix $(\T_0, \T_1 ) \in \Ss_2(v_1, v_j)$. Since $v_* \in \T_0$, it follows that $\T_1$ will always be a path along the boundary cycle, containing $v_1, v_j$. In particular, the tree $\T_1$ contains at least $j$ vertices (by (\ref{eq:jineq})), and at most $n$ vertices. For $j \leq m \leq n$, write $\Ss_2^m$ for the set of $(\T_0, \T_1 )\in  \Ss_2(v_1, v_j)$ with the property that $\T_1$ contains exactly $m$ vertices. Then it suffices to determine the size of each $\Ss_2^m$, since
$$\vert \Ss_2(v_1, v_j) \vert= \sumdd{m = j}{n} \vert \Ss_2^m \vert.$$

\begin{figure}[h]
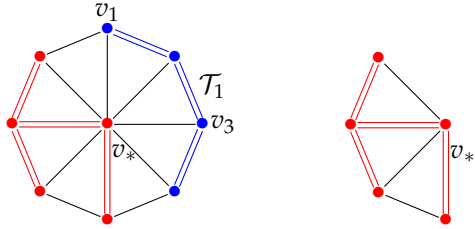

\vspace{2cm}
\centerline{\xy /r3pc/:,
(-1.5,0)*={\xy
	(0,0)*[red]{\bullet}="A0";
		(0,1)*[blue]{\bullet}="A1";
		(.7071,.7071)*[blue]{\bullet}="A2";
		(1,0)*[blue]{\bullet}="A3";
		(.7071,-.7071)*[blue]{\bullet}="A4";
		(0,-1)*[red]{\bullet}="A5";
		(-.7071,-.7071)*[red]{\bullet}="A6";
		(-1,0)*[red]{\bullet}="A7";
		(-.7071,.7071)*[red]{\bullet}="A8";
		"A1"*+!D{\hbox{${{v_1}}$}};
		"A3"*+!L{\hbox{${{v_3}}$}};
		"A0";"A1" **\dir{-};
		"A0";"A2" **\dir{-};
		"A0";"A3" **\dir{-};
		"A0";"A4" **\dir{-};
		"A0";"A5" **[red]\dir2{-};
		"A0";"A6" **\dir{-};
		"A0";"A7" **[red]\dir2{-};
		"A0";"A8" **\dir{-};
		"A1";"A2" **[blue]\dir2{-};
		"A2";"A3" **[blue]\dir2{-};
		"A3";"A4" **[blue]\dir2{-};
		"A4";"A5" **\dir{-};
		"A5";"A6" **\dir{-};
		"A6";"A7" **[red]\dir2{-};
		"A7";"A8" **[red]\dir2{-};
		"A8";"A1" **\dir{-};
		(.18,-.3)*{v_*};
		(1.1,.4)*{\T_1};
\endxy};
(1.5,-.3)*={\xy
	(0,0)*[red]{\bullet}="A0";
		(0,-1)*[red]{\bullet}="A5";
		(-.7071,-.7071)*[red]{\bullet}="A6";
		(-1,0)*[red]{\bullet}="A7";
		(-.7071,.7071)*[red]{\bullet}="A8";
		"A0";"A5" **[red]\dir2{-};
		"A0";"A6" **\dir{-};
		"A0";"A7" **[red]\dir2{-};
		"A0";"A8" **\dir{-};
		"A5";"A6" **\dir{-};
		"A6";"A7" **[red]\dir2{-};
		"A7";"A8" **[red]\dir2{-};
		(.18,-.3)*{v_*};
\endxy};
\endxy}
\vspace{1.8cm}
\caption{The figure on the left is the wheel graph $W_9$ on nine vertices, with $j = 3$. A spanning 2-forest $\{\T_0, \T_1 \}$ is indicated, with $\T_0$ in red and $\T_1$ in blue; the edges of these trees have been doubled for emphasis. The tree $\T_1$ has $m = 4$ vertices given by $v_1, \ldots, v_4$. Note that $\T_1$ is a path consisting entirely of boundary vertices. On the right is the cone $C(P_{4})$ obtained by deleting all vertices and edges associated to $\T_1$. Note that the trees $\T_0$ for which $(\T_0, \T_1) \in \Ss_2^4$ are in one-to-one correspondence with the spanning trees of $C(P_4)$ on the right.\hfill $\Diamond$}
\label{figure5}
\end{figure}

Fix $m$ with $j \leq m \leq n-(j-1)$. Then any $( \T_0, \T_1 ) \in \Ss_2^m$ must be such that $\T_1$ contains the smaller of the two arcs in the boundary $k$-cycle that connects $v_1$ and $v_j$. See Figure \ref{figure5} for an example and Figure \ref{figure6} for a non-example. There are $m-(j-1)$ choices of such $\T_1$, and we fix one $\T_1$. Remove from $W_{n+1}$ all vertices of $\T_1$ as well as all edges adjacent to a vertex in $\T_1$. The result is the cone $C(P_{n-m})$ of a path $P_{n-m}$ on $n-m$ vertices. Then any $\T_0$ with $(\T_0, \T_1 ) \in \Ss_2^m$ is a spanning tree for $C(P_{n-m})$. By Lemma \ref{lem:spanforcone}, there are $F_{2(n-m)}$ such spanning trees. This gives $\vert \Ss_2^m \vert = (m-(j-1)) F_{2(n-m)}$.

When $n-(j-2) \leq m \leq n-1 $, a boundary path $\T_1$ with $m$ vertices can contain either of the two boundary arcs between $v_1$ and $v_j$. See Figure \ref{figure6}. There are therefore $2m - n$ such paths $\T_{1}$. Fixing such a path $\T_1$, just as in the previous paragraph, there are $F_{2(n-m)}$ choices of $\T_0$ so that $(\T_0, \T_1) \in \Ss_2^m$. This gives $\vert \Ss_2^m \vert =(2m-n)F_{2(n-m)}$.

\vspace{1.2cm}
\begin{figure}[h]
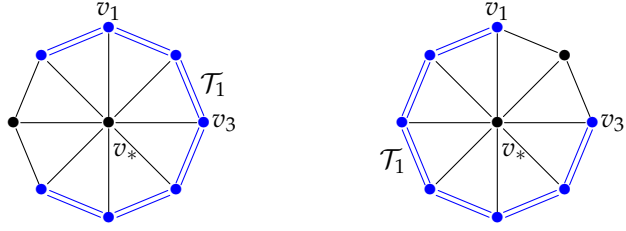

\centerline{\xy /r3pc/:,
(-2,0)*={\xy
	(0,0)*{\bullet}="A0";
		(0,1)*[blue]{\bullet}="A1";
		(.7071,.7071)*[blue]{\bullet}="A2";
		(1,0)*[blue]{\bullet}="A3";
		(.7071,-.7071)*[blue]{\bullet}="A4";
		(0,-1)*[blue]{\bullet}="A5";
		(-.7071,-.7071)*[blue]{\bullet}="A6";
		(-1,0)*{\bullet}="A7";
		(-.7071,.7071)*[blue]{\bullet}="A8";
		"A1"*+!D{\hbox{${{v_1}}$}};
		"A3"*+!L{\hbox{${{v_3}}$}};
		"A0";"A1" **\dir{-};
		"A0";"A2" **\dir{-};
		"A0";"A3" **\dir{-};
		"A0";"A4" **\dir{-};
		"A0";"A5" **\dir{-};
		"A0";"A6" **\dir{-};
		"A0";"A7" **\dir{-};
		"A0";"A8" **\dir{-};
		"A1";"A2" **[blue]\dir2{-};
		"A2";"A3" **[blue]\dir2{-};
		"A3";"A4" **[blue]\dir2{-};
		"A4";"A5" **[blue]\dir2{-};
		"A5";"A6" **[blue]\dir2{-};
		"A6";"A7" **\dir{-};
		"A7";"A8" **\dir{-};
		"A8";"A1" **[blue]\dir2{-};
		(.18,-.3)*{v_*};
		(1.1,.4)*{\T_1};
\endxy};
(2,0)*={\xy
	(0,0)*{\bullet}="A0";
		(0,1)*[blue]{\bullet}="A1";
		(.7071,.7071)*{\bullet}="A2";
		(1,0)*[blue]{\bullet}="A3";
		(.7071,-.7071)*[blue]{\bullet}="A4";
		(0,-1)*[blue]{\bullet}="A5";
		(-.7071,-.7071)*[blue]{\bullet}="A6";
		(-1,0)*[blue]{\bullet}="A7";
		(-.7071,.7071)*[blue]{\bullet}="A8";
		"A1"*+!D{\hbox{${{v_1}}$}};
		"A3"*+!L{\hbox{${{v_3}}$}};
		"A0";"A1" **\dir{-};
		"A0";"A2" **\dir{-};
		"A0";"A3" **\dir{-};
		"A0";"A4" **\dir{-};
		"A0";"A5" **\dir{-};
		"A0";"A6" **\dir{-};
		"A0";"A7" **\dir{-};
		"A0";"A8" **\dir{-};
		"A1";"A2" **\dir{-};
		"A2";"A3" **\dir{-};
		"A3";"A4" **[blue]\dir2{-};
		"A4";"A5" **[blue]\dir2{-};
		"A5";"A6" **[blue]\dir2{-};
		"A6";"A7" **[blue]\dir2{-};
		"A7";"A8" **[blue]\dir2{-};
		"A8";"A1" **[blue]\dir2{-};
		(.18,-.3)*{v_*};
		(-1.1,-.4)*{\T_1};
\endxy};
\endxy}
\vspace{1.8cm}
\caption{Each of the figures above illustrate an example of $\T_1$ on $W_9$ with $j = 3$ and $m = 7$; the vertices and edges of $\T_1$ are indicated in blue, with the edges doubled for emphasis. The figure on the left contains the small arc in the boundary circle between $v_1$ and $v_j$. The figure on the right contains the large arc between $v_1$ and $v_j$.\hfill $\Diamond$}
\label{figure6}
\end{figure}

The last case to consider is when $m = n$. If $(\T_0, \T_1) \in \Ss_2^n$, then $\T_0$ necessarily consists only of $v_*$, and $\T_1$ can be any spanning tree of the boundary $n$-cycle obtained by deleting $v_*$ from $W_{n+1}$. Thus, $\vert \Ss_2^n \vert = n$. In summary, we have
\[ \vert \Ss_2(v_1, v_j)\vert  = n + \sum_{m=j}^{n-(j-1)} (m-(j-1))F_{2(n-m)}
+ \sum_{m=n-(j-2)}^{n-1} (2m - n)F_{2(n-m)}  \]

The aim now is to simplify this. At this point, it is convenient to change the summation index from $m$ to $\ell \defeq n - m$. Then using the identities
\[ \begin{array}{rcl}
\sumdd{\ell=a}{b}\ell F_{2\ell} &=& bF_{2b+1}-F_{2b}-aF_{2(a-1)+1} + F_{2(a-1)+2} \\ \sumdd{\ell=a}{b}F_{2\ell}& = &F_{2b+1}-F_{2(a-1)+1} 
\end{array}\]
we can write
$$\begin{array}{rcl}
\vert \Ss_2(v_1, v_j)\vert  & = & n + \sumdd{\ell=j-1}{n-j} (n-\ell-j+1)F_{2\ell} + \sumdd{\ell=1}{j-2} (n-2\ell)F_{2\ell} \\
& = & n + (n - j + 1)\sumdd{\ell=j-1}{n-j} F_{2\ell}  - \sumdd{\ell=j-1}{n-j} \ell F_{2\ell}+  n \sumdd{\ell=1}{j-2} F_{2\ell} - 2  \sumdd{\ell=1}{j-2} \ell F_{2\ell} \\
&=& n+(n-j+1)[F_{2(n-j)+1} -F_{2(j-2)+1}] \\ 
&& - [(n-j)F_{2(n-j)+1} -F_{2(n-j)} -(j-1)F_{2(j-2)+1} +F_{2(j-2)+2}] \\ &&+n[F_{2(j-2)+1} - F_{2(0)+1}] \\ &&-2[(j-2)F_{2(j-2)+1}-F_{2(j-2)} - (1)F_{2(0)+1} +F_{2(0)+2}] \\
    &=& F_{2(n-j)+1} + F_{2(n-j)} - F_{2(j-2)+2} + 2 F_{2(j-2)+1}+2 F_{2(j-2)}\\
    & = & F_{2(n-j+1)} +F_{2(j-1)}
\end{array}$$
where, in the last line, we used the recursive relation for the Fibonacci numbers. The theorem is an immediate consequence of the following identity:

\medskip

\noindent \emph{Claim:} $\fracd{1}{A_{2n}-2}(F_{2(n-j+1)} + F_{2(j-1)}) = \left\{ \begin{array}{ll}
                             \fracd{1}{5 F_n}A_{n-2(j-1)}& \textrm{if $n$ is even}\\
                             \fracd{1}{A_n}F_{n-2(j-1)}& \textrm{if $n$ is odd}\\
                             \end{array}\right.$
                             
                             \medskip
  
  To prove this claim, recall the Fibonacci and Lucas numbers satisfy $F_{-a}  =  (-1)^{a+1} F_a$, $A_{-a} =  (-1)^a A_a$, and                           
\begin{equation}\label{eq:fibadd}
\begin{array}{rclcrcl}
F_{a+b} &=& \frac{1}{2}(F_a A_b + A_a F_b) &\hspace{1cm}&  A_{a+b} &=& \frac{1}{2}(5F_a F_b + A_a A_b).
\end{array}
\end{equation}
This implies
\begin{equation} \label{eq:fibsub}
F_{a-b} = \frac{(-1)^b}{2}(F_a A_b - A_a F_b) \hspace{1cm}  A_{a-b} = \frac{(-1)^{b+1}}{2}(5F_a F_b - A_a A_b) 
\end{equation}
From which we obtain the identities $A_a F_b = F_{a+b} + (-1)^{b+1}F_{a-b}$ and $A_a A_b = A_{a+b} + (-1)^b A_{a-b}$. Freely referring to these identities, the following computation completes the proof of the claim when $n$ is even:
$$\begin{array}{rcl}
(A_{2n}-2)A_{n-2(j-1)} & = & (A_{2n}+2) A_{n-2(j-1)} - 4 A_{n-2(j-1)}\\
& = & A_n^2 A_{n-2(j-1)} - 4 A_{n - 2(j-1)}\\
& = & A_n(A_{2(n-j+1)} + A_{2(j-1)}) - 4 A_{n - 2(j-1)}\\
& = & A_n A_{2(n-j+1)} + A_n A_{2(j-1)} - 4 A_{n - 2(j-1)}\\
& = & 2A_{n-2(j-1)} + 5 F_n F_{2(n-j+1)}\\
&& + 2A_{n-2(j-1)} + 5 F_n F_{2(j-1)} - 4A_{n-2(j-1)}\\
& = & 5 F_n(F_{2(n-j+1)} + F_{2(j-1)}).
\end{array}$$
The case where $n$ is odd is similar and left to the reader. 
\end{proof}

\subsubsection{Proofs of Theorem \ref{thm:wheel} and Corollary \ref{cor:specialwheel}} \label{sec:WheelProofs}

\begin{proof}[Proof of Theorem \ref{thm:wheel}]
We prove the result under the assumption that $n$ is even; the case where $n$ is odd is similar. The wheel graph is the cone of a regular graph (a cycle graph), so by Corollary \ref{cor:2} it suffices to show that $\sigma$ is in the image of $L'$ if and only if (\ref{eq:neven}) holds for all $i$. Define $M \defeq 5F_n (L')^{-1}$; this is an integral matrix by Theorem \ref{thm:laplacewheel}. Then $\sigma$ is in the image of $L': \bb{Z}^{\V'} \rightarrow \bb{Z}^{\V'}$ if and only if $5F_n$ divides the $i$th component $\langle M \sigma, e_i \rangle$ of $M\sigma$ for all $i \in \bb{Z}/n \bb{Z}$. Using Theorem \ref{thm:laplacewheel} again, we see that $\langle M \sigma, e_{i+n/2} \rangle$ is the left-hand side of (\ref{eq:neven}). This finishes the proof since $5F_n$ divides $\langle M \sigma, e_i \rangle$ for all $i \in \bb{Z}/n \bb{Z}$ if and only if $5F_n$ divides $\langle M \sigma, e_{i+n/2} \rangle$ for all $i \in \bb{Z}/n \bb{Z}$.
\end{proof}

Our proof of Corollary \ref{cor:specialwheel} relies on the following equivalences for even Lucas numbers.

\begin{lemma} \label{lem:mod A conj}
Let $j,\ell, c \in \bb{Z}$ with $\ell\geq 1$. Then
$$\begin{array}{rcll} 
    A_{2j}  & \equiv & (-1)^{j}A_{0} &\mod 5\\
    A_{2(j + c 2^{\ell})} & \equiv & (-1)^c A_{2j}& \mod A_{2^{\ell}}.
\end{array}$$
\end{lemma}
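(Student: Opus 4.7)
The plan is to derive both congruences from the sum-to-product identities for $A_n$ already in hand in the excerpt, in particular the identity $A_a A_b = A_{a+b} + (-1)^b A_{a-b}$ recorded just after (\ref{eq:fibsub}), together with the addition formula (\ref{eq:fibadd}).

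For the first congruence, setting $a = b = j$ in both identities yields
\[ A_{2j} = \tfrac{1}{2}(5F_j^2 + A_j^2) \quad\textrm{and}\quad A_j^2 = A_{2j} + 2(-1)^j. \]
Substituting the second into the first eliminates $A_j^2$ and gives the clean doubling formula $A_{2j} = 5F_j^2 + 2(-1)^j$. Reducing modulo $5$ and recalling that $A_0 = 2$ produces $A_{2j} \equiv (-1)^j A_0 \mod 5$, which is the first claim.

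For the second congruence, the key observation is that when $b$ is even the identity $A_a A_b = A_{a+b} + (-1)^b A_{a-b}$ simplifies to $A_a A_b = A_{a+b} + A_{a-b}$, and hence
\[ A_{a+b} \equiv -A_{a-b} \mod A_b. \]
Since $\ell \geq 1$, the exponent $b = 2^\ell$ is even, so this applies. Taking $a = 2j + (2c - 1)2^\ell$ (an integer of the correct parity so that both $a+b$ and $a-b$ land at the indices we want) yields the one-step recurrence
\[ A_{2(j + c\cdot 2^\ell)} \equiv -A_{2(j + (c-1)\cdot 2^\ell)} \mod A_{2^\ell}, \]
valid for every integer $c$. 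Iterating outward from $c = 0$ and inducting on $|c|$ then gives $A_{2(j + c \cdot 2^\ell)} \equiv (-1)^c A_{2j} \mod A_{2^\ell}$.

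I do not anticipate a serious obstacle: once one notices that $2^\ell$ is even precisely when $\ell \geq 1$ (explaining the hypothesis of the lemma), both congruences reduce to a single application each of the Lucas-number product-sum identity already displayed in the paper. The only genuine choice is to route the proof through the product-sum identity rather than through Binet's formula, since the former is already available locally and keeps the argument entirely elementary.
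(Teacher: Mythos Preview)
Your proof is correct and essentially mirrors the paper's own argument: both derive $A_{2j} = 5F_j^2 + (-1)^j A_0$ from the addition/subtraction formulas (\ref{eq:fibadd})--(\ref{eq:fibsub}) with $a=b=j$, and both obtain the second congruence from $A_{a+b} = A_aA_b - (-1)^bA_{a-b}$ with $b = 2^\ell$ (even since $\ell\geq 1$) followed by induction on $c$. The only cosmetic difference is that the paper takes $a = 2j + 2^\ell$ to get the $c=1$ step directly, whereas you take $a = 2j + (2c-1)2^\ell$ to write the recurrence for general $c$ in one stroke.
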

\begin{proof}
Add (\ref{eq:fibadd}) and (\ref{eq:fibsub}) with $a = b = j$ to get $A_{2j} =  5 F_j^2+(-1)^j A_0$. The first identity of the lemma follows. For the second identity, use $A_{a+b} = A_a A_b - (-1)^b A_{a-b}$ with $a = 2j+2^\ell$ and $b =  2^\ell$ to get
$$A_{2(j + 2^{\ell})} =A_{2j +  2^{\ell} + 2^{\ell}}  =  A_{2j+ 2^{\ell}}A_{2^{\ell}} - A_{2j} \equiv - A_{2j} \mod A_{2^\ell}.$$
This proves the identity for $c = 1$, and the identity for general $c \in \bb{Z}$ follows from induction. 
\end{proof}

\begin{proof}[Proof of Corollary \ref{cor:specialwheel}]
Assume $n = 2^k$ for $k \geq 2$, fix $\sigma \in \bb{Z}^{\V'}$ and set 
$$\begin{array}{rcl}
D_{i} &\defeq &   A_{n}\sigma_{i+2^{k-1}} + A_{0}\sigma_{i} + \sumdd{m=1}{2^{k-1}-1} A_{2m}(\sigma_{i+m}+\sigma_{i-m}).
\end{array}$$ 
As discussed in the introduction to this section, it follows from Theorem \ref{thm:laplacewheel} and the relatively prime decomposition $5F_{2^k} = 5 \prod_{\ell = 1}^{k-1} A_{2^\ell}$ that $\sigma$ is immutable if and only if, for all $i \in \bb{Z}/2^k \bb{Z}$, the integer $D_{ i}$ is congruent to 0 mod 5, and congruent to 0 mod $A_{2^\ell}$ for all $1 \leq \ell \leq k - 1$. Our aim is to simplify these conditions by exploiting the symmetries expressed in Lemma \ref{lem:mod A conj}.

We begin with a preliminary computation. By the $2^k$-periodicity of the index of $\sigma_i$, we have $\sigma_{i-m} = \sigma_{i+ 2^k-m}$, and so we can write
$$\begin{array}{rcl}
D_{ i} & = & \Big( A_0 \sigma_i + \sumdd{m =1}{2^{k-1}-1} A_{2m} \sigma_{i+m}\Big) + \Big(A_{2^k} \sigma_{i+2^k-2^{k-1}} + \sumdd{m = 1}{2^{k-1}-1}A_{2m} \sigma_{i+2^k-m}\Big)\\
& = & \sumdd{m = 0}{2^{k-1}-1} A_{2m} \sigma_{i+m} + \sumdd{m = 1}{2^{k-1}}A_{2m} \sigma_{i+2^k -m}\\
& = & \sumdd{m = 0}{2^{k-1}-1} A_{2m} \sigma_{i+m} + \sumdd{m = 2^{k-1}}{2^{k}-1}A_{2(2^k-m)} \sigma_{i+m}
\end{array}$$
where we performed the change of index $m \mapsto  2^{k}-m$ in the second sum of the last line. It follows from Lemma \ref{lem:mod A conj} that $A_{2(2^k-m)} \equiv A_{-2m} = A_{2m}$, whenever working mod $5$ or mod $A_{2^\ell}$. Thus, we can continue the above to get
\begin{equation}\label{eq:Dkmod}
D_{i} \equiv \sumdd{m=0}{2^k-1} A_{2m} \sigma_{i+m} \indent \textrm{mod $5$ \; \; or \; \; mod $A_{2^\ell}$}.
\end{equation}
This is the desired preliminary computation.

Now consider the mod 5 case. Applying Lemma \ref{lem:mod A conj} again, we see immediately from (\ref{eq:Dkmod}) that
$$D_i \equiv A_0 \sumdd{m=0}{2^k-1} (-1)^m \sigma_{i+m} \indent \mod 5.$$
Note first that this shows $D_{i+1} \equiv -D_i \mod 5$, and so this being congruent to 0 is independent of $i$. Since $A_0 = 2$ is invertible mod $5$, it follows that $D_{i} \equiv 0 \mod 5$ for all $i$ if and only if (\ref{eq:mod5spw}) holds (which corresponds to $i = 0$). 

Next, fix $1 \leq \ell \leq k -1$ and work mod $A_{2^\ell}$. By (\ref{eq:Dkmod}), we have
$$\begin{array}{rcl}
D_{i+2^\ell} & \equiv & \sumdd{m=0}{2^{k}-1} A_{2m} \sigma_{i+m+2^\ell}\\
& \equiv & \sumdd{m=0}{2^{k}-2^\ell-1} A_{2m} \sigma_{i+m+2^\ell} + \sumdd{m=2^k-2^\ell}{2^{k}-1} A_{2m} \sigma_{i+m+2^\ell}\\
& = & \sumdd{m=2^\ell}{2^{k}-1} A_{2(m-2^\ell)} \sigma_{i+m} + \sumdd{m=0}{2^{\ell}-1} A_{2(m-2^\ell-2^k)} \sigma_{i+m}
\end{array}$$
where we did a change of index in each sum appearing in the last line. By Lemma \ref{lem:mod A conj}, we can continue this as
$$D_{i+2^\ell} \equiv -\sumdd{m=2^\ell}{2^{k}-1} A_{2m} \sigma_{i+m} - \sumdd{m=0}{2^{\ell}-1} A_{2m} \sigma_{i+m} = -D_{i} \indent \mod A_{2^\ell}.$$
Thus $D_{i} \equiv 0 \mod A_{2^\ell}$ for all $i \in \{0, \ldots, 2^k-1\}$ if and only if $D_{i} \equiv 0 \mod A_{2^\ell}$ for all $i \in \{0, \ldots, 2^\ell-1\}$ (the latter being a considerably smaller system). 

Finally, we need to compute $D_{i}$ mod $A_{2^\ell}$ and show it has the claimed form. For this, return to the expression (\ref{eq:Dkmod}) and note that each $m \in \{0, \ldots, 2^k -1\}$ can be expressed as $m = 2^\ell c + j$ for unique $c \in \{0, \ldots, 2^{k-\ell }-1 \}$ and $j \in \{0, \ldots, 2^\ell - 1 \}$. Then using Lemma \ref{lem:mod A conj} again, we have
$$\begin{array}{rcl}
D_i & \equiv  & \sumdd{c = 0}{2^{k-\ell}-1} \sumdd{j = 0}{2^\ell - 1} A_{2(2^\ell c + j)} \sigma_{i + 2^\ell c + j}\\
& \equiv  & \sumdd{c = 0}{2^{k-\ell}-1} (-1)^c \sumdd{j = 0}{2^\ell - 1} A_{2j} \sigma_{i + 2^\ell c + j}
\end{array}$$
Note that when $j = 2^{\ell - 1}$ we have $A_{2j} \equiv 0$. Thus, we can write
$$\begin{array}{rcl}
D_i & \equiv  & \sumdd{c = 0}{2^{k-\ell}-1} (-1)^c\Big[ A_0 \sigma_{i + 2^\ell c}\\
&& + \sumdd{j = 1}{2^{\ell-1} - 1} A_{2j} \sigma_{i + 2^\ell c + j}+\sumdd{j = 2^\ell +1}{2^{\ell} - 1} A_{2j} \sigma_{i + 2^\ell c + j}\Big]\\
& \equiv & \sumdd{c = 0}{2^{k-\ell}-1} (-1)^c\Big[ A_0 \sigma_{i + 2^\ell c}\\
&& + \sumdd{j = 1}{2^{\ell-1} - 1} A_{2j} \sigma_{i + 2^\ell c + j}+\sumdd{j = 1}{2^{\ell-1} - 1} A_{2(2^{\ell-1}+j)} \sigma_{i + 2^\ell c +2^{\ell-1}-  j}\Big]\\
\end{array}$$
where we did an index change in the second sum. Use $A_{2(2^{\ell-1} +j)} \equiv - A_{2(2^{\ell-1} -j)}$ and another reindexing to continue this as
$$\begin{array}{rcl}
D_i & \equiv & \sumdd{c = 0}{2^{k-\ell}-1} (-1)^c\Big[ A_0 \sigma_{i + 2^\ell c}\\
&& + \sumdd{j = 1}{2^{\ell-1} - 1} A_{2j} \sigma_{i + 2^\ell c + j}+\sumdd{j = 1}{2^{\ell-1} - 1} (-1)A_{2j} \sigma_{i + 2^\ell (c+1) -  j}\Big]\\
\end{array}$$
Focus on the last term (its sum over both $c$ and $j$), and write this as
$$\begin{array}{rcl}
\sumdd{c = 0}{2^{k-\ell}-1} (-1)^c \sumdd{j = 1}{2^{\ell-1} - 1} (-1)A_{2j} \sigma_{i + 2^\ell (c+1) -  j} & = & \sumdd{c = 0}{2^{k-\ell}-1} (-1)^{c+1} \sumdd{j = 1}{2^{\ell-1} - 1} A_{2j} \sigma_{i + 2^\ell (c+1) -  j}\\
& = & \sumdd{c = 1}{2^{k-\ell}} (-1)^c \sumdd{j = 1}{2^{\ell-1} - 1} A_{2j} \sigma_{i + 2^\ell c -  j}\\
& \equiv & \sumdd{c = 0}{2^{k-\ell}-1} (-1)^c \sumdd{j = 1}{2^{\ell-1} - 1} A_{2j} \sigma_{i + 2^\ell c -  j}
\end{array}$$
where the last equality holds by the $2^k$-periodicity of the index of $\sigma_i$, which implies the $c = 0$ term equals the $c = 2^\ell$ term. In summary, this gives
$$\begin{array}{rcl}
D_i & \equiv & \sumdd{c = 0}{2^{k-\ell}-1} (-1)^c\Big[ A_0 \sigma_{i + 2^\ell c} + \sumdd{j = 1}{2^{\ell-1} - 1} A_{2j} (\sigma_{i + 2^\ell c + j}+\sigma_{i+2^\ell c - j} )\Big].
\end{array}$$

\end{proof}

\small

\end{document}